\theoremstyle{plain}
\numberwithin{equation}{section}
\newtheorem{thm}{Theorem}[section]
\newtheorem{lemm}[thm]{Lemma}
\newtheorem{cor}[thm]{Corollary}
\theoremstyle{remark}
\newtheorem{rema}[thm]{Remark}
\newcommand{\curl}{{\rm curl}}
\newcommand{\bq}{\begin{equation}}
\newcommand{\eq}{\end{equation}}
\begin{document}

\title[]{Global Regularity for  some Oldroyd  type models}

\author{Tarek Mohamed Elgindi}
\address{The Courant Institute, New-York University, 
251 Mercer Street, New-York,    USA}
\email{elgindi@cims.nyu.edu}
\author{Fr\'ed\'eric Rousset}
\address{IRMAR, Universit\'e de Rennes 1, Campus de Beaulieu, 35042 Rennes cedex, France}



\begin{abstract}

We investigate some  critical models for visco-elastic flows of Oldroyd-B type in dimension two. We use a transformation which exploits the Oldroyd-B structure to prove an $L^\infty$ bound on the vorticity which allows us to prove global regularity for our systems.

\end{abstract}

\maketitle

\section{Introduction}

Over the past several years, there have been many works devoted to visco-elastic equations in dimensions 2 and 3. Much work has been done both in the macro-scopic and micro-scopic regimes. For the most part, with some exceptions, results have been limited to global existence of weak solutions and global well-posedness for smooth solutions with small data. We focus here upon the Oldroyd-B  type models for visco-elastic flow
\begin{align}
\label{equ} &   \partial_{t} u +(u\cdot \nabla) u + \nabla p= \nu \Delta u +  K div(\tau),  \\
\label{eqtau} &  \partial_{t} \tau + (u\cdot \nabla) \tau  + \beta \tau- Q(\nabla u, \tau) = \mu \Delta \tau + \alpha Du,  \\
\label{eqdiv}  &   \nabla \cdot u =0. \end{align}
 We shall assume that the fluid domain is the whole $\mathbb{R}^d$ (we could also consider the $\mathbb{T}^d$ case
  with minor changes).
The vector  $u \in \mathbb{R}^d$  is the velocity of the fluid, $\tau \in \mathcal{S}_{d}(\mathbb{R})$  which is a symmetric tensor is the non-Newtonian part of the stress tensor, $p\in \mathbb{R}$ is the pressure of the fluid, $Du$ is the symmetric part of the velocity gradient
$$ Du = {1 \over 2} ( \nabla u + \nabla u^t)$$
 and
 $Q$ above is a given bilinear form.
  One usually chooses
  $$ Q(\nabla u, \tau)=  \Omega \tau - \tau \Omega + b (Du \tau + \tau Du) $$
  where $\Omega$ is the skew symmetric part of $\nabla u$ and $b$ is a parameter in $[-1, 1]$.
  
  The parameters $K, \, \alpha, \, \mu, \, \nu, \, \beta$ are such that 
  $$ \beta \geq 0, \, \nu \geq 0, \, \mu \geq 0, \, K>0, \, \alpha >0.$$
 There is a growing  literature devoted to these  systems so that it is almost impossible to be exhaustive in this introduction.  We shall thus  only quote the main types of results that have been proven  and  a few  significant works. 
 
 In the case $\mu>0$ and $\nu>0,$ where the system is  fully parabolic,  global well-posedness for strong solutions has been shown by Constantin and Kliegel \cite{4}. In the most interesting case where $\nu>0$ and $\mu=0$ (which is the classical case),  global existence of weak solutions has been shown in the corotational case (with b= 0) only \cite{15}. Many works have  been devoted to  the  proof of global well-posedness  in the case of small data after the work of   F.H. Lin et al. (\cite{12}, \cite{13}, \cite{14}). 
 
 In this paper, we shall be  interested in the case $\nu =0$, $\mu>0$, in dimension two ($d=2$),    thus we shall study a coupling
   between the incompressible Euler equation and a convection diffusion equation for  $\tau$.
    As we shall explain just  below, even when taking $Q=0$ this coupling is critical with respect to the smoothing effect provided by the partial
     parabolic regularization.
     
    Indeed, the case  $\nu>0$,  $\mu>0$ can be considered as a subcritical case in dimension $2$. 
       The smoothing effect provided by the two parabolic equations is enough to ensure the existence of global smooth solutions. 
        This was proven recently by  Constantin and Kliegel \cite{4}.  The proof  is based on  the energy  method and the use of the maximum 
         principle only.  The corotational assumption is not needed.

In $2D$, the case  $\nu>0$,  $\mu=0$ can be considered as critical. Indeed, as already explained, we  
 have global existence of weak solutions (without uniqueness) when $Q$ has a special structure (that is, in the corotational case) and  global existence of smooth solutions with small data.  Neverthless, let us  note that even in the case where $Q\equiv 0,$  the global existence of  smooth  solutions is open and quite challenging. Indeed, consider the following simple model, where we even
  replace the Navier-Stokes equations by the Stokes equations:
$$-\Delta u + \nabla p = div(\tau)$$
$$\tau_{t} +(u\cdot \nabla)\tau = Du$$
$$\nabla \cdot u =0.$$
We see from the first and third equations that $\nabla u$ can be written as a singular integral operator applied to $\tau$, 
and we can rewrite the  system under the form
$$\nabla u= R_{1}\tau$$
$$\tau_{t} +(u\cdot \nabla) \tau = R_{2}\tau$$
for some singular integral operators $R_{1}$, $R_{2}$ (of Riesz transform type).
Because $R_{1}$ and $R_{2}$ are unbounded on $L^\infty,$ it is impossible to derive an $L^\infty$ a-priori estimate on $\tau$ and consequently the well-posedness question seems  difficult unless some special structure is used (see \cite{5}). Such a special structure is not apparent for this system.  The same type  of problem  of borderline regularity occurs if we replace in the above system the stationary Stokes system by
 the evolutionary one.

Still in 2D, the case  $\nu=0$,  $\mu>0$ is also a critical case.  To the best of our knowledge, it has not been studied. For the fully non-linear problem, we prove global well-posedness for small data. Moreover,  we will prove that in the case where $Q=0,$ we have global existence of strong solutions even with large data. In this case, we have a similar situation to the previous case ($\nu>0$, $\mu=0$, $Q=0$). However, a  nice structural property of the system allows us to get around the problem that singular integral operators are not bounded on $L^\infty .$

To conclude this short review, we point out that  in the fully hyperbolic case  where $\nu=0$ and $\mu=0$ in $2D$,  
almost global existence of small smooth solutions has been obtained using dispersive-type estimates in \cite{11}.  Global existence for large solutions is of course  completely open.  

Finally, we can make the connection of  this work with  the works on other systems showing  a critical coupling.
 A lot of works have been devoted recently to the two-dimensional Boussinesq system with partial dissipation \cite{Chae}, \cite{Hou}, \cite{6},  \cite{Danchin}
  for example.
  In particular, a critical coupling has been studied in \cite{7}, \cite{8}, \cite{9}, \cite{10}. The coupling in the Boussinesq system
   is simpler than the one in the Oldroyd models (or in the Magnetohydrodynamics system)  since it is triangular in the sense
    that the vorticity equation is forced by the gradient of the temperature, but that the temperature solves an unforced convection-diffusion
     equation. Consequently, in the case of critical coupling, if one can find a combination of the vorticity and the temperature
      that has better regularity properties, it is rather easy to deduce  an estimate on each individual quantity.
      This is not the case for   the Oldroyd model  (even if $Q=0$)  or the MHD system since they are  fully coupled.

%
%
%

\subsection{Main Results}
We shall first focus on a simplified system for \eqref{equ}, \eqref{eqtau}, \eqref{eqdiv} where we take $Q=0$.
\begin{thm}
\label{th1}
Consider the following system,
\begin{align}
\label{Q=01}&  u_{t}+(u\cdot \nabla) u + \nabla p=  K div(\tau) \\
\label{Q=02}&  \tau_{t} + (u\cdot \nabla) \tau + \beta \tau =  \mu \Delta \tau + \alpha  Du  \\
\label{Q=03}&  \nabla\cdot u =0, \end{align}
with parameters such that $\mu >0$,   and $\alpha \in \mathbb{R}$, $\beta \geq 0$, $K \geq 0$.
 Then for every $(u_{0}, \tau_{0}) \in H^s$ with $s>2$, there exists a unique global solution 
 $ (u, \tau) \in \mathcal{C}([0,  + \infty), H^s)$ with initial data $(u_{0}, \tau_{0})$.
\end{thm}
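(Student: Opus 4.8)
The starting point is the \textbf{local theory}: for $(u_{0},\tau_{0})\in H^{s}$ with $s>2$ one constructs, by a standard fixed–point/energy argument (the parabolic term $\mu\D\tau$ only helps, and the $u$–equation is a forced two–dimensional Euler equation), a unique maximal solution $(u,\tau)\in\mathcal{C}([0,T^{*}),H^{s})$ together with a continuation criterion: if $T^{*}<\infty$ then $\int_{0}^{T^{*}}\|\nabla u(t)\|_{L^{\infty}}\,dt=+\infty$. By the Biot--Savart law and the logarithmic (Beale--Kato--Majda/Yudovich type) inequality $\|\nabla u\|_{L^{\infty}}\lesssim 1+\|\omega\|_{L^{2}}+\|\omega\|_{L^{\infty}}\log(e+\|u\|_{H^{s}})$, where $\omega=\partial_{1}u_{2}-\partial_{2}u_{1}$ is the scalar vorticity, the whole problem reduces to an a priori bound, on every finite interval, for $\|\omega\|_{L^{\infty}}$ together with the low norms $\|u\|_{L^{2}}$, $\|\tau\|_{L^{2}}$, $\|\omega\|_{L^{2}}$. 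When $K=0$ the $u$–equation is just $2$D Euler, so we may assume $K>0$.

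\textbf{The transformation.} Taking $\curl$ of \eqref{Q=01} gives $\partial_{t}\omega+(u\cdot\nabla)\omega=K\sigma$ with $\sigma:=\curl\,\div\,\tau$. Applying $\curl\,\div$ to \eqref{Q=02} and using $\curl\,\div(Du)=\tfrac12\D\omega$ (valid since $\div u=0$) yields $\partial_{t}\sigma+(u\cdot\nabla)\sigma+\beta\sigma=\mu\D\sigma+\tfrac{\alpha}{2}\D\omega-\mathcal{N}$, where $\mathcal{N}:=\curl\,\div[(u\cdot\nabla)\tau]-(u\cdot\nabla)\sigma$ is a commutator which, using $\div u=0$, can be written as a sum of terms bilinear of type $\nabla^{2}u\cdot\nabla\tau$ and $\nabla u\cdot\nabla^{2}\tau$, each of which is an exact divergence of an expression only first order in $\tau$. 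The point is that the $\D\omega$ above is exactly absorbed by introducing the \emph{good unknown}
\[
\Gamma:=\sigma+\frac{\alpha}{2\mu}\,\omega=\curl\,\div\,\tau+\frac{\alpha}{2\mu}\,\omega,
\]
which solves the closed system
\[
\partial_{t}\Gamma+(u\cdot\nabla)\Gamma=\mu\D\Gamma+\Big(\tfrac{\alpha K}{2\mu}-\beta\Big)\Gamma-\tfrac{\alpha}{2\mu}\Big(\tfrac{\alpha K}{2\mu}-\beta\Big)\omega-\mathcal{N},
\]
\[
\partial_{t}\omega+(u\cdot\nabla)\omega+\tfrac{\alpha K}{2\mu}\,\omega=K\Gamma.
\]
Thus $\omega$ is merely transported and damped and forced to \emph{zeroth order} by $\Gamma$, while $\Gamma$ satisfies a genuine convection--diffusion equation forced linearly by $(\Gamma,\omega)$ and by $\mathcal{N}$; moreover the $2\times2$ zeroth–order coupling matrix has eigenvalues $0$ and $-\beta$ (it is nilpotent when $\beta=0$), so the coupling produces at most polynomial–in–time growth. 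Were $\mathcal{N}$ absent the argument would close immediately: parabolic regularity gives $\Gamma\in L^{\infty}_{t}L^{\infty}_{x}$, and integrating the $\omega$–equation along the volume–preserving particle flow yields
\[
\|\omega(t)\|_{L^{\infty}}\le\|\omega_{0}\|_{L^{\infty}}+K\int_{0}^{t}\|\Gamma(s)\|_{L^{\infty}}\,ds<\infty,
\]
the desired $L^{\infty}$ bound on the vorticity.

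\textbf{Closing the estimates.} The scheme is a bootstrap. One first records the basic bounds: the Oldroyd energy identity $\tfrac{d}{dt}\big(\tfrac{\alpha}{2K}\|u\|_{L^{2}}^{2}+\tfrac12\|\tau\|_{L^{2}}^{2}\big)=-\beta\|\tau\|_{L^{2}}^{2}-\mu\|\nabla\tau\|_{L^{2}}^{2}$, which for $\alpha>0$ gives $u,\tau\in L^{\infty}_{t}L^{2}$ and $\nabla\tau\in L^{2}_{t,x}$ (for $\alpha\le0$ the same quantities are propagated by a Gr\"onwall argument from the $(\Gamma,\omega)$ system below). Next, $L^{2}$– and $L^{p}$–in–space estimates for the $(\Gamma,\omega)$ system give $\Gamma,\omega\in L^{\infty}_{t}L^{2}$, $\nabla\Gamma\in L^{2}_{t,x}$, and $\Gamma,\omega\in L^{\infty}_{t}L^{p}$ for every $p<\infty$, with at most polynomial growth; here $\mathcal{N}$, being a divergence, is handled by integrating one derivative back onto $\Gamma$ (absorbed into $\mu\|\nabla\Gamma\|_{L^{2}}^{2}$) at the price of $\|\nabla u\|_{L^{\infty}}\|\nabla\tau\|_{L^{2}}$, where $\nabla\tau\in L^{2}_{t,x}$ is available from the energy identity and $\|\nabla u\|_{L^{\infty}}$ is fed back logarithmically through $\|\omega\|_{L^{2}\cap L^{\infty}}$. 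One then upgrades $\Gamma$ to $L^{\infty}_{t,x}$ from its convection--diffusion equation via the heat–kernel Duhamel formula, the divergence structure of $\mathcal{N}$ together with the $L^{q}$/H\"older regularity of $\nabla u$ and of $\tau$ (the latter from parabolic regularity in the $\tau$–equation, $Du$ lying in every $L^{q}$) making the relevant time integral convergent. Feeding $\Gamma\in L^{1}_{t}L^{\infty}_{x}$ into the $\omega$–equation produces the $L^{\infty}$ bound on $\omega$; together with $\omega\in L^{\infty}_{t}L^{2}$ this bounds $\int_{0}^{T}\|\nabla u\|_{L^{\infty}}\,dt$ for every finite $T$, closing the continuation criterion, and the $H^{s}$ norm is then propagated by the usual energy estimate. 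Uniqueness is part of the local theory.

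\textbf{Main obstacle.} The delicate point is the commutator $\mathcal{N}$. It contains terms of type $\nabla^{2}u\cdot\nabla\tau$, and $\nabla^{2}u$ is a singular integral of $\omega$, hence \emph{not} controlled by $\|\omega\|_{L^{\infty}}$ (nor is $\nabla\tau$ bounded in $L^{\infty}$); a crude estimate would require $\|\nabla\omega\|_{L^{2}}$, i.e.\ precisely the $H^{s}$–type control the argument is trying to propagate, and the loop would be circular. The resolution is to exploit that $\mathcal{N}$ is an exact divergence of an expression first order in $\nabla\tau$ (or in $\nabla u$): one derivative always lands on $\Gamma$ and is absorbed by $\mu\D\Gamma$, the remaining factor being controlled by $\nabla\tau\in L^{2}_{t,x}$ and by log–norms of $\nabla u$ — all quantities slaved to $\|\omega\|_{L^{2}\cap L^{\infty}}$, so the bootstrap closes. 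A secondary nuisance is that for $\alpha\le0$ the Oldroyd energy is not coercive, so the low–order $L^{2}$ bounds must be extracted directly from the $(\Gamma,\omega)$ system.
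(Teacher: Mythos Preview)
Your good unknown is not the one the paper uses, and the difference is decisive. The paper sets
\[
\Gamma=\mu\,\omega-K\,R\tau,\qquad R:=\Delta^{-1}\curl\,\div,
\]
so that $R$ is a \emph{zero--order} Calder\'on--Zygmund operator and $\Gamma$ solves a pure transport equation
$\partial_t\Gamma+u\cdot\nabla\Gamma=K\beta\,R\tau-K\alpha\,\omega+K[R,u\cdot\nabla]\tau$.
Because $R$ is zero order, the commutator gains a full derivative:
$\|[R,u\cdot\nabla]\tau\|_{L^\infty}\lesssim(\|\omega\|_{L^2}+\|\omega\|_{L^\infty})(\|\tau\|_{L^2}+\|\tau\|_{B^\varepsilon_{\infty,1}})$.
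The paper first proves an enstrophy estimate directly on $(u,\tau)$ (using $\int(u\cdot\nabla u)\cdot\Delta u=0$ and Ladyzhenskaya), obtaining $\tau\in L^2_tH^2$ and hence $\|\tau\|_{B^\varepsilon_{\infty,1}}\in L^2_t$; then the maximum principle plus Gr\"onwall on the transport equation closes $\|\Gamma\|_{L^\infty}$ and thus $\|\omega\|_{L^\infty}$.

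Your choice $\Gamma=\curl\,\div\,\tau+\tfrac{\alpha}{2\mu}\omega$ keeps the second--order operator, and the resulting commutator $\mathcal N=\div F$ with $|F|\lesssim|\nabla u||\nabla\tau|$ is one derivative worse. This is where your scheme breaks. At the $L^2$ level you write $|\int F\cdot\nabla\Gamma|\le\tfrac\mu2\|\nabla\Gamma\|_{L^2}^2+C\|\nabla u\|_{L^\infty}^2\|\nabla\tau\|_{L^2}^2$ and then appeal to ``logarithmic feedback''; but the logarithmic inequality itself requires $\|\omega\|_{L^\infty}$, exactly the quantity you are trying to reach, so the loop is genuinely circular. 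The alternative splittings fare no better: $\|F\|_{L^2}\le\|\nabla u\|_{L^2}\|\nabla\tau\|_{L^\infty}$ fails because $\nabla\tau\in L^2_tH^1$ just misses $L^\infty$ in $2$D, and $\|F\|_{L^2}\le\|\nabla u\|_{L^4}\|\nabla\tau\|_{L^4}$ brings in $\|\omega\|_{L^4}$ which you also do not yet control. The Duhamel upgrade to $L^\infty$ has the same defect: with only $F\in L^2_tL^p_x$ (all $p<\infty$), the kernel bound $\|e^{(t-s)\Delta}\div F\|_{L^\infty}\lesssim(t-s)^{-1/2-1/p}\|F\|_{L^p}$ leads to $\int_0^t(t-s)^{-1-2/p}\,ds$, divergent for every finite $p$. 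You are also missing the separate enstrophy step $\nabla u,\nabla\tau\in L^\infty_tL^2$, $\nabla^2\tau\in L^2_tL^2$, which in the paper is obtained directly on $(u,\tau)$ before any good unknown is introduced and is the input that makes the commutator estimate usable. The structural point you should take away is that inverting one Laplacian, i.e.\ working with $R=\Delta^{-1}\curl\,\div$ rather than $\curl\,\div$, converts the commutator from a first--order object into a zero--order Calder\'on commutator with a clean gain, and this is what lets the $L^\infty$ estimate close.
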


Note that this is a global existence result of smooth solutions without any size restriction on the data. 
%
%

In the case of the full model \eqref{equ}, \eqref{eqtau}, \eqref{eqdiv} without any particular assumption on the quadratic form $Q$, we shall prove:  
\begin{thm}
\label{th2}
Let us consider  the system
\begin{align}
& u_{t}+(u\cdot \nabla) u + \nabla p= K  div(\tau), \\
&  \tau_{t} + (u\cdot \nabla) \tau  + \beta \tau=  \mu \Delta \tau +  \alpha Du  + Q (\nabla u, \tau), \\ 
& \nabla \cdot u=0,
\end{align}
for any quadratic form $Q$.
 Let us assume that $ K>0, \, \alpha >0$, $\beta>0$, $\mu >0$ then, there exists $\delta>0$ such that for
  $(u_{0}, \tau_{0})\in H^s, \, s>2$ and 
 $$ \|(u_{0}, \tau_{0}) \|_{H^1} + \|(\omega_{0}, \tau_{0} )\|_{B_{\infty, 1}^0} \leq \delta, \quad \omega_{0}= \curl\,  u_{0}$$
  there exists a unique global solution   $(u, \tau) \in \mathcal{C}([0,  + \infty), H^s)$ with initial data $(u_{0}, \tau_{0})$.

\end{thm}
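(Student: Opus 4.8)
The plan is to run a continuation argument based on an a priori estimate for the combined norm $N(t) := \|(u,\tau)(t)\|_{H^1} + \|(\omega,\tau)(t)\|_{B^0_{\infty,1}}$, showing that if $N(0) \le \delta$ with $\delta$ small then $N(t)$ stays small for all time; local well-posedness in $H^s$, $s>2$, together with a continuation criterion in terms of $\|\omega\|_{L^\infty}$ and $\|\nabla\tau\|_{L^\infty}$ (which $B^0_{\infty,1}$ controls, since $B^0_{\infty,1}\hookrightarrow L^\infty$ and the parabolic equation for $\tau$ upgrades one derivative), then yields the global smooth solution. The starting point is the basic energy estimate: testing \eqref{equ} against $u$ and \eqref{eqtau} against $\tau$ (with the coupling constants balanced, i.e.\ multiplying the $\tau$-equation by $K/\alpha$), the symmetric coupling terms $K\,\mathrm{div}(\tau)\cdot u$ and $\alpha\, Du:\tau$ cancel after integration by parts, the term $Q(\nabla u,\tau):\tau$ is cubic and hence absorbable for small data, and $\beta>0$, $\mu>0$ give damping/dissipation of $\tau$; this controls $\|(u,\tau)\|_{L^2}$ and gives a spacetime bound on $\|\nabla\tau\|_{L^2}$ and $\|\tau\|_{L^2}$.

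Next I would differentiate: taking $\mathrm{curl}$ of \eqref{equ} gives
\begin{equation}
\partial_t \omega + (u\cdot\nabla)\omega = K\,\mathrm{curl}\,\mathrm{div}(\tau),
\end{equation}
an unforced-transport equation for $\omega$ perturbed by a term with the same number of derivatives as $\nabla\tau$; differentiating \eqref{eqtau} gives a convection–diffusion equation for $\nabla\tau$ forced by $\beta\nabla\tau$, by $\nabla Q(\nabla u,\tau)$ (which contains $\nabla\omega$ and $\nabla Du$, i.e.\ second derivatives of $u$, the genuinely dangerous term), and by $\alpha\nabla Du$. The key structural observation — the analogue of the transformation advertised in the abstract — is that the two first-order coupling terms $K\,\mathrm{curl}\,\mathrm{div}(\tau)$ in the $\omega$-equation and $\alpha\, D u$ in the $\tau$-equation conspire: one forms a good unknown, morally $\omega$ minus a zero-order operator applied to $\tau$, for which the worst coupling cancels and the remainder is either damped (thanks to $\beta>0$) or quadratic (hence small). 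I would carry out the $B^0_{\infty,1}$ estimate for this good unknown and for $\nabla\tau$ using the parabolic smoothing of the $\tau$-equation: the heat semigroup gains, in Besov scale, enough regularity that $\int_0^t \|e^{(t-s)\mu\Delta}\nabla^2 u(s)\|_{B^0_{\infty,1}}\,ds$ is controlled by $\int_0^t \|\nabla u(s)\|_{B^1_{\infty,1}}\,ds$ up to a logarithmic loss — and $\|\nabla u\|_{B^1_{\infty,1}}$ is in turn controlled by $\|\omega\|_{B^0_{\infty,1}}$ plus lower order by Calderón–Zygmund/Biot–Savart in Besov spaces. All the remaining nonlinear terms (the transport commutators, the quadratic $Q$-contributions) are at least quadratic in $N$, so they are absorbed for $\delta$ small, while the linear-in-$N$ terms come with the damping coefficients $\beta,\mu$ and close the estimate $\frac{d}{dt}N + c\,N \lesssim N^2$, whence $N(t)\le 2\delta$ for all $t$ provided $\delta$ is small.

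The main obstacle is precisely the marginal term $\nabla Q(\nabla u,\tau)$ in the $\nabla\tau$ equation: it contains $\nabla^2 u$, which is one derivative worse than what $\|\omega\|_{B^0_{\infty,1}}$ naively bounds, so one cannot treat the $\tau$-equation in isolation. The resolution — and this is where the full coupling, as opposed to the triangular Boussinesq structure the introduction contrasts with, really bites — is to use the parabolic smoothing in $\tau$ to pay for that derivative, accepting the borderline nature of $B^0_{\infty,1}$ (chosen exactly so that the heat-kernel time integral of a second-derivative source is summable over Littlewood–Paley blocks), and to exploit that this term is multiplied by $\tau$, hence quadratic and small. A secondary technical point is propagating the higher $H^s$ regularity: once $\int_0^\infty(\|\omega\|_{L^\infty}+\|\nabla\tau\|_{L^\infty})\,dt<\infty$ is known from the $N$-bound, a standard $H^s$ energy estimate with a Gronwall/commutator argument (Kato–Ponce, logarithmic Sobolev inequalities of Beale–Kato–Majda type) gives $\|(u,\tau)\|_{H^s}$ finite on every finite interval, completing the continuation.
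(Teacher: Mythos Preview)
Your overall architecture---energy estimate, a good unknown $\Gamma=\mu\omega-KR\tau$ with $R=\Delta^{-1}\curl\,\div$, then a bootstrap---matches the paper. But two points in your execution diverge from the paper and the first is a genuine gap.

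\medskip

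\textbf{The source of damping on $\Gamma$.} You attribute the damping of the good unknown to $\beta$ and $\mu$. In fact the damping comes from the coupling term $\alpha Du$: the key algebraic identity is $R(Du)=\omega$, so applying $KR$ to the $\tau$-equation produces a $K\alpha\,\omega$ on the right, and after subtracting $\mu$ times the vorticity equation one obtains
\[
\partial_t\Gamma + u\cdot\nabla\Gamma + \tfrac{K\alpha}{\mu}\,\Gamma
= \big(K\beta-\tfrac{K\alpha}{\mu}\big)R\tau + K[R,u\cdot\nabla]\tau - K\,R\big(Q(\nabla u,\tau)\big).
\]
The coefficient $K\alpha/\mu>0$ is what gives the exponential weight $e^{-c(t-s)}$ in the $B^0_{\infty,1}$ Duhamel formula and makes the time integrals close globally. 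The parameter $\beta$ only damps $\tau$, not $\Gamma$; if you rely on $\beta$ for the vorticity side the argument will not close.

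\medskip

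\textbf{How $\tau$ is handled in strong norms.} You propose to estimate $\nabla\tau$ in $B^0_{\infty,1}$ via heat smoothing and identify $\nabla Q(\nabla u,\tau)$ (hence $\nabla^2 u$) as the main obstacle. The paper avoids this entirely. It first closes an $H^1$ energy estimate for $(u,\tau)$ \emph{together with} the $L^2$ estimate for $\Gamma$: the damped $\Gamma$-equation in $L^2$ reveals hidden dissipation of $\|\nabla u\|_{L^2}$, and combined with the $\mu\|\nabla^2\tau\|_{L^2}^2$ dissipation from the $\tau$-equation this yields $\tau\in L^2_tH^2$ and $\nabla u\in L^2_tL^2$ with bounds of size~$\delta$. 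In two dimensions $H^2\hookrightarrow B^\varepsilon_{\infty,1}$, so $\tau\in L^2_tB^\varepsilon_{\infty,1}$ comes for free. The only $B^0_{\infty,1}$ estimate needed is for $\Gamma$, and in that equation $Q$ appears as $R(Q(\nabla u,\tau))$, which is zero-order---no $\nabla^2 u$ arises. Your route would also need smallness of $\nabla\tau_0$ in $B^0_{\infty,1}$, which the hypothesis does not give, and the linear forcing $\alpha\nabla Du$ in the $\nabla\tau$-equation is not quadratic, so it does not absorb under a small-data assumption alone.
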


\subsection*{Remark 1.} 

 The difficulty in Theorem 1.1 is getting an $L^\infty$ estimate on the vorticity. Indeed, at first glance it would seem hopeless because the  vorticity equation involves a transport term as well as a non-local term. Therefore application of the maximum principle is not allowed--in fact, it is possible to show that the vorticity \emph{does not} satisfy a maximum principle (see \cite{5}). However, as we will show below, it is possible to find a structural cancellation which allows us to derive $L^\infty$ estimates on the vorticity. The main difference between our system and the original Oldroyd-B system (where the dissipation is in the velocity equation) is the \emph{distribution} of the dissipation--not the amount of dissipation. Indeed, one could distribute the dissipation artificially by putting $(-\Delta)^\eta$ dissipation on $u$ and $(-\Delta)^\theta$ dissipation on $\tau,$ $\eta+\theta\geq 1, \, \theta>0$ and the $L^\infty$ bound on $\omega$ would still be possible. Unfortunately, for the $\theta=0$ case our method does not seem to work.  
\par\
After obtaining the $L^\infty$ estimate on $\omega$, showing the global existence of smooth solutions follows
 using a BKM type argument \cite{2}.
 
 We have chosen to avoid additional technicalities and to state the result by  working in  the most simple functional framework
  that allows to use the BKM argument since we were mostly interested in the global existence of smooth solutions.   
 By using  the arguments of \cite{Vishik}, \cite{Hmidi} in the end of the proof,   it should be possible to handle a   critical regularity for the Euler equation,  
  for example $ u \in  B_{\infty, 1}^1 \cap H^1$, where $B^s_{p,q}$ are the standard Besov spaces (see \cite{1} for example).

\subsection*{Remark 2.}

The difficulty in the proof of  Theorem 1.2 is that there is no clear dissipation of the velocity. Therefore it would seem impossible to prove even a small-data result. Nevertheless, the same structural trick used in the proof of Theorem 1.1 allows us to transfer dissipation from $\tau$ to $u.$ From this we will get that $\|\nabla u\|_{L^2}$ decays exponentially in time which will then allow us to prove global existence for small data.  

Note that the smallness condition that we have stated is independent of the index of regularity that we propagate. 

\bigskip

The paper is organized as follows, in the next section, we shall explain briefly the main steps in the proof of Theorem 1.1 (the proof
 of Theorem 1.2 follows roughly the same scheme).
Next, in section \ref{sectionprelim}, we shall derive some useful commutator estimates. Finally sections
 \ref{sectionth1} and \ref{sectionth2} are devoted to the proof of Theorem 1.1 and 1.2 respectively.

\section{Main Ideas}

In this section we give the main ideas for the proof of Theorem 1.1. To clarify the main ideas, we will take $\beta=0,$ $\mu=\alpha=K=1.$

The first step will be to use the basic energy estimate 
which give
$$\frac{d}{dt}( \int |u|^2 dx + \int |\tau|^2 dx )+\int |\nabla \tau|^2=0.$$
This gives us control on $u$ and $\tau$ in $L^{\infty}_{t}L^{2}_{x}$ and on $\tau$ in $L^{2}_{t}\dot{H}^{1}_{x}.$
Since our system contains the incompressible Euler equation, the estimate on the velocity is not enough
 to get global well-posedness, we basically need to get an estimate of the vorticity in $L^\infty$.
 
  Before, getting the $L^\infty$ estimate on the vorticity, the second step will  be to improve the energy estimate by
   getting an  enstrophy estimate. We shall  show that $$\frac{d}{dt} (\int |\nabla u|^2+\int |\nabla \tau|^2)+\frac{1}{2}\int |\Delta \tau|^2 \leq C \int|\nabla u|^2 \int|\nabla \tau|^2 $$

Using the energy estimate and the enstrophy estimate together along with Gronwall's lemma, we can get bounds on $\Delta \tau$ in $L^2 L^2$ and $\nabla u, \nabla \tau$ in $L^{\infty}_{\text{loc}} L^2 .$  

The main step will be next to get the $L^\infty$ estimate on  the vorticity $\omega= \partial_{1} u_{2} - \partial_{2} u_{1}$.
A consequence of the enstrophy estimates is that $\tau$ belongs to $W^{1-\epsilon,\infty}$ for every $\epsilon>0$ (by the Sobolev imbedding theorem). For the purposes of establishing $L^\infty$ bounds on the vorticity $\omega,$ it will suffice to know that $\tau$ belongs to $W^{\epsilon,\infty}$ for some $\epsilon>0.$
Taking the curl of \eqref{equ} and keeping \eqref{eqtau} as it is, we get the following system:
\begin{align*}
&\omega_{t} + u\cdot \nabla \omega = curl(div(\tau)), \\
&\tau_{t} +(u\cdot\nabla) \tau = \Delta \tau + Du.
\end{align*}
Now, let $R$ denote the singular integral operator which operates on matrices in the following way:
$$R (\cdot) \equiv -(-\Delta)^{-1} curl(div(\cdot)).$$
Note that $R$ is a degree zero singular integral operator. Therefore it maps $W^{\epsilon,\infty}$ continuously to itself for all $\epsilon>0.$
Then taking $R$ of the equation for $\tau$ we get:
$$R(\tau)_{t} +(u\cdot\nabla) R(\tau) + [u\cdot\nabla,R]\tau = curl(div( \tau)) + \omega.$$

Now subtracting the equation for $\omega$ from the equation for $R(\tau)$ we get
$$(\omega-R(\tau))_{t} + u\cdot \nabla (\omega-R(\tau))= -(\omega-R(\tau))-R(\tau) +[u\cdot\nabla,R]\tau.$$
Let $\Gamma=\omega-R(\tau), $
 since we have very good estimates on $\tau$ already, $\Gamma \in L^\infty$ if and only if $\omega \in L^\infty $ with the obvious bounds.    
Note that the structure of the equations is exactly such that the right hand side above is $-\Gamma$ plus some terms which are not too bad.
 The crucial simplification that has occurred is that
  $ R (Du) = \omega$.
  This yields that no singular integral operator applied to $\Gamma$ occurs in the equation for $\Gamma$. Otherwise,  this would have  ruined our $L^\infty$ estimates (again see \cite{5}).   

The only technical point that remains  is to show that the commutator in the right-hand side of the above equation  can be bounded in a good way. This can be done using the fact that $\tau$ has a good estimate. We shall use the following:
$$ |[u\cdot\nabla,R]\tau|_{L^\infty} \lesssim_{\epsilon} (1+|\Gamma|_{L^\infty})|\tau|_{H^{1+\epsilon}}.$$

\section{Preliminary  Tools and Estimates}
\label{sectionprelim}
Throughout the paper, we shall use the notation $\lesssim$ for $\leq C$ where $C$ is an harmless positive constant that my change
 from line to line.  For norms, we use the notation $ \| \cdot \|_{L^p_{T} X}$ with $X$ a normed space for the
  norm $\|\cdot \|_{L^p ([0, T], X)}.$
 
In this section we give some of the commutator estimates which we need. We refer to the book \cite{1} or the introduction
 of \cite{8} for  example for the definition of Besov spaces.
 
Let us set $\mathcal{R}_{i}= \partial_{i}/ |D|$, then we have from Theorem 3.3 in \cite{8} that
\begin{thm}\label{propcom}
Let  $v$ be is  a smooth  divergence-free  vector field. 
\begin{enumerate}
\item For every $(p,r)\in [2,\infty[\times[1,\infty]$ there exists 
a constant $C=C(p,r)$ such that 
$$
\|[\mathcal{R}_{i}, v\cdot\nabla]\theta\|_{B_{p,r}^0}\le C \|\nabla v\|_{L^p}\big(\|\theta\|_{B_{\infty,r}^0}+\|\theta\|_{L^p}\big),
$$
for every smooth scalar function $\theta$.
\item 
For every  $r\in[1,\infty]$  and $\epsilon >0$ there exists 
a constant $C=C(r,\varepsilon)$ such that 
$$
 \|[\mathcal{R}_{i}, v\cdot\nabla]\theta\|_{B_{\infty,r}^0}\le C (\|\omega\|_{L^\infty}+\|\omega\|_{L^2})\big(\|\theta\|_{B_{\infty,r}^\epsilon}+\|\theta\|_{L^2}\big),
 $$
 for every smooth scalar function $\theta$, where $\omega= \mbox{curl } v$.
 \end{enumerate}
 \end{thm}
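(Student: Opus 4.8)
To prove Theorem~\ref{propcom} the plan is to combine one algebraic observation with Bony's paraproduct calculus. Since $\mathcal{R}_i$ and $\partial_j$ are Fourier multipliers they commute, so
\[
[\mathcal{R}_i, v\cdot\nabla]\theta \;=\; \mathcal{R}_i\bigl(v^j\partial_j\theta\bigr) - v^j\mathcal{R}_i\bigl(\partial_j\theta\bigr) \;=\; \sum_j [\mathcal{R}_i, v^j]\,\partial_j\theta ,
\]
and it is enough to control $[\mathcal{R}_i,a]g$ with $a=v^j$ and $g=\partial_j\theta$; the point is that $[\mathcal{R}_i,a]$ is smoothing of order one on the scale on which $a$ is Lipschitz, and this gain has to absorb the derivative sitting on $g$. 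I would then insert the Bony decompositions $ag = T_ag + T_ga + R(a,g)$ and $a\,\mathcal{R}_ig = T_a\mathcal{R}_ig + T_{\mathcal{R}_ig}a + R(a,\mathcal{R}_ig)$, and use that $\mathcal{R}_i$ commutes with each $\Delta_q$ and $S_q$ to write $[\mathcal{R}_i,a]g$ as the sum of a \emph{genuine} commutator $[\mathcal{R}_i,T_{v^j}]\partial_j\theta$ (the low--high part), the high--low part $\mathcal{R}_i T_{\partial_j\theta}v^j - T_{\mathcal{R}_i\partial_j\theta}v^j$, and the remainder part $\mathcal{R}_i R(v^j,\partial_j\theta) - R(v^j,\mathcal{R}_i\partial_j\theta)$.

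The heart of the argument is the low--high term $[\mathcal{R}_i,T_{v^j}]\partial_j\theta = \sum_q [\mathcal{R}_i, S_{q-1}v^j]\,\Delta_q\partial_j\theta$. Here the coefficient carries a full derivative's worth of size, $\|\nabla S_{q-1}v^j\|_{L^p}\lesssim\|\nabla v\|_{L^p}$ with no $2^{-q}$ to spare, so the room has to come from the commutator itself: localising $\mathcal{R}_i$ to frequency $\sim 2^q$, writing its kernel as $2^{dq}k(2^q\cdot)$ with $k$ rapidly decaying, and using Minkowski's inequality, one gets
\[
\|[\mathcal{R}_i, S_{q-1}v^j]\,\Delta_q\partial_j\theta\|_{L^p}\;\lesssim\; 2^{-q}\,\|\nabla S_{q-1}v^j\|_{L^p}\,\|\Delta_q\partial_j\theta\|_{L^\infty}\;\lesssim\;\|\nabla v\|_{L^p}\,\|\Delta_q\theta\|_{L^\infty},
\]
the factors $2^{-q}$ (from the commutator) and $2^{q}$ (from $\|\Delta_q\partial_j\theta\|_{L^\infty}\lesssim 2^q\|\Delta_q\theta\|_{L^\infty}$) cancelling; taking the $\ell^r$ norm in $q$ gives the main term of part~(1). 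For part~(2) one measures in $L^\infty$ and needs $\|\nabla S_{q-1}v^j\|_{L^\infty}$: in two dimensions $\nabla v$ is a bounded Fourier multiplier of $\omega=\curl v$, so $\|\Delta_k\nabla v\|_{L^\infty}\lesssim\|\omega\|_{L^\infty}$ for $k\ge 0$ while the bottom frequencies are handled by $\|\omega\|_{L^2}$ through Biot--Savart, whence $\|\nabla S_{q-1}v^j\|_{L^\infty}\lesssim(1+q_+)(\|\omega\|_{L^\infty}+\|\omega\|_{L^2})$. The extra logarithmic factor $q_+$ is precisely what is swallowed by the gain $2^{-\epsilon q}$ obtained from measuring $\theta$ in $B_{\infty,r}^{\epsilon}$ with $\epsilon>0$; this is the origin of the $\epsilon$-loss in part~(2).

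In the high--low and remainder terms there is no cancellation, but the derivative can be paid for differently. In the high--low term one uses $\|\Delta_q v^j\|_{L^p}\lesssim 2^{-q}\|\nabla v\|_{L^p}$, and this $2^{-q}$ cancels the $2^{q}$ produced by the derivative landing on the low factor $S_{q-1}\partial_j\theta$; the $\ell^r$-summability is then carried by $\theta$ alone, via $\|S_{q-1}\partial_j\theta\|_{L^\infty}\lesssim 2^q d_q$ with $(d_q)_q\in\ell^r(\Z)$ and $\|(d_q)\|_{\ell^r}\lesssim\|\theta\|_{B_{\infty,r}^0}$, so that no regularity of $v$ beyond $\nabla v\in L^p$ is used. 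In the remainder term one first uses $\nabla\cdot v=0$ to write $R(v^j,\partial_j\theta)=\partial_j R(v^j,\theta)$ (since $R(\nabla\cdot v,\theta)=0$), and likewise for the $\mathcal{R}_i\partial_j\theta$ piece; estimating $R(v^j,\theta)$ with the same $2^{-q}$ gain and then differentiating produces the geometric factor in $2^{q'-k}$ needed to sum the high-frequency tail. The leftover low-frequency interactions are what produce the harmless $\|\theta\|_{L^p}$ term (and, in part~(2), $\|\theta\|_{L^2}$).

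I expect the only genuinely non-elementary ingredient to be the commutator kernel estimate used in the low--high term; the rest is Littlewood--Paley bookkeeping, the recurring point being to keep the derivative count balanced and to track which sequence carries the $\ell^r$ summability. For part~(2) the one thing to check carefully is that $1+q_+$ is dominated by $2^{-\epsilon q}$ uniformly in $q\ge 0$ so that the geometric series converges — which is exactly why $\epsilon>0$ is forced there. This is the content of Theorem~3.3 of \cite{8}.
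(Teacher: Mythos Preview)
The paper does not prove this statement; it simply quotes it verbatim from Theorem~3.3 of \cite{8} (Hmidi--Keraani--Rousset), as announced in the line immediately preceding the theorem. There is therefore no ``paper's own proof'' to compare against.

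Your sketch is, in outline, exactly the paraproduct argument carried out in \cite{8}: write $[\mathcal{R}_i, v\cdot\nabla]\theta = \sum_j[\mathcal{R}_i,v^j]\partial_j\theta$, split via Bony's decomposition into a genuine commutator $[\mathcal{R}_i,T_{v^j}]\partial_j\theta$ plus high--low and remainder pieces, gain a factor $2^{-q}$ on the low--high term from the smooth-kernel commutator lemma, and for part~(2) absorb the logarithmic growth of $\|\nabla S_{q-1}v\|_{L^\infty}\lesssim (1+q_+)(\|\omega\|_{L^\infty}+\|\omega\|_{L^2})$ into the $2^{-\epsilon q}$ gain from $\theta\in B_{\infty,r}^\epsilon$. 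The identification of the key ingredient (the kernel commutator estimate) and of the reason for the $\epsilon>0$ hypothesis is accurate. One small point to tighten if you write this out in full: in the high--low and remainder pieces for part~(2) you apply $\mathcal{R}_i$ externally, and $\mathcal{R}_i$ is not bounded on $L^\infty$; you must therefore estimate block by block (where $\Delta_q\mathcal{R}_i$ is bounded on $L^\infty$ for $q\ge 0$) and handle the $q=-1$ block via the $L^2$ contribution, which is what ultimately forces the $\|\theta\|_{L^2}$ term on the right-hand side. You already allude to this, but it deserves an explicit line.
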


 We can thus obtain for $R = \Delta^{-1} \mbox{curl }\mbox{div }$ that 
 
 \begin{cor}
 \label{corcom}
 \begin{enumerate}
  For $u$ divergence free, we have
\item For every $(p,r)\in [2,\infty[\times[1,\infty]$ there exists 
a constant $C=C(p,r)$ such that 
$$
\|[ R, u\cdot\nabla]\tau\|_{B_{p,r}^0}\le C \|\nabla u\|_{L^p}\big(\|\tau\|_{B_{\infty,r}^0}+\|\tau\|_{L^p}\big),
$$
\item 
For every  $r\in[1,\infty]$  and $\epsilon >0$ there exists 
a constant $C=C(r,\rho,\varepsilon)$ such that 
$$
 \|[ R, u\cdot\nabla]\tau\|_{B_{\infty,r}^0}\le C (\|\omega\|_{L^\infty}+\|\omega\|_{L^2})\big(\|\tau\|_{B_{\infty,r}^\epsilon}+\|\tau\|_{L^2}\big),
 $$
\end{enumerate}
 
 \end{cor}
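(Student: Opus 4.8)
The plan is to deduce the two commutator bounds for $R = \Delta^{-1}\,\mathrm{curl}\,\mathrm{div}$ directly from Theorem \ref{propcom} by writing $R$ as a finite linear combination of compositions $\mathcal R_i \mathcal R_j$ of the Riesz operators $\mathcal R_k = \partial_k/|D|$, and then reducing the commutator $[R, u\cdot\nabla]$ to commutators of the form $[\mathcal R_i, u\cdot\nabla]$ for which Theorem \ref{propcom} applies. First I would record the algebraic identity: since $\mathrm{curl}\,\mathrm{div}$ applied to a symmetric tensor $\tau$ produces a second-order differential operator, and $\Delta^{-1}$ supplies the compensating two negative derivatives, each scalar component of $R\tau$ is of the form $\sum_{i,j} \mathcal R_i \mathcal R_j (\tau_{k\ell})$ with constant coefficients (in $d=2$, $\mathrm{curl}\,\mathrm{div}\,\tau = \partial_1\partial_2(\tau_{22}-\tau_{11}) + (\partial_2^2-\partial_1^2)\tau_{12}$, so $R\tau = \mathcal R_1\mathcal R_2(\tau_{22}-\tau_{11}) + (\mathcal R_2^2 - \mathcal R_1^2)\tau_{12}$). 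Thus it suffices to bound $[\mathcal R_i\mathcal R_j, u\cdot\nabla]\theta$ in the relevant Besov norm for a scalar $\theta$.

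The key step is the commutator-of-a-composition identity
\[
[\mathcal R_i\mathcal R_j,\, u\cdot\nabla]\theta = \mathcal R_i\,[\mathcal R_j,\, u\cdot\nabla]\theta + [\mathcal R_i,\, u\cdot\nabla]\big(\mathcal R_j\theta\big).
\]
For the first term on the right I would use that $\mathcal R_i$ is a degree-zero Fourier multiplier, hence bounded on $B^0_{p,r}$ for $p\in(1,\infty)$ and on $B^\epsilon_{\infty,r}$, $\epsilon>0$ (it is not bounded on $B^0_{\infty,r}$, but we never need that), together with the corresponding bound from Theorem \ref{propcom}. For the second term I apply Theorem \ref{propcom} directly to the scalar function $\mathcal R_j\theta$, and control $\|\mathcal R_j\theta\|_{B^0_{\infty,r}}$ (resp. $\|\mathcal R_j\theta\|_{B^\epsilon_{\infty,r}}$, $\|\mathcal R_j\theta\|_{L^2}$, $\|\mathcal R_j\theta\|_{L^p}$) by $\|\theta\|_{B^{\epsilon'}_{\infty,r}}$ etc.: here one must be slightly careful since $\mathcal R_j$ is unbounded on $B^0_{\infty,r}$ and on $L^\infty$, but in part (1) the output norm is $B^0_{p,r}$ with $p<\infty$ where $\mathcal R_j$ is bounded, and in part (2) we have an $\epsilon$ of room, so $\|\mathcal R_j\theta\|_{B^\epsilon_{\infty,r}}\lesssim \|\theta\|_{B^\epsilon_{\infty,r}}$ after possibly shrinking $\epsilon$, and $\|\mathcal R_j\theta\|_{L^2}\lesssim\|\theta\|_{L^2}$ by Plancherel. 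Summing over the finitely many $(i,j,k,\ell)$ appearing in $R$ and absorbing the fixed number of terms into the constant gives exactly the stated inequalities, with $\|\nabla u\|_{L^p}$ in part (1) and $\|\omega\|_{L^\infty}+\|\omega\|_{L^2}$ in part (2) as the coefficient (recall $\omega=\mathrm{curl}\,u$, and for $p\in[2,\infty)$ one also has $\|\nabla u\|_{L^p}\lesssim\|\omega\|_{L^p}$ by Calderón--Zygmund, which is why the $L^p$ term is harmless).

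The only genuinely delicate point — and the one I expect to be the main obstacle to write cleanly — is the unboundedness of the Riesz transforms on the endpoint space $B^0_{\infty,r}$ (equivalently on $L^\infty$): one cannot naively pull $\mathcal R_j$ past the $B^0_{\infty,r}$-norm in part (2), so the argument genuinely uses the $\epsilon$-gain in the hypothesis $\|\theta\|_{B^{\epsilon}_{\infty,r}}$ on the right-hand side and the fact that the commutator is estimated in $B^0_{\infty,r}$ rather than the full operator $R(u\cdot\nabla\theta)$. This is precisely the structural subtlety emphasized in the introduction (that $R$ applied to $\Gamma$ would ruin the $L^\infty$ estimate), and the statement of the corollary is arranged so that it never occurs: the commutator structure always leaves us applying $\mathcal R_i$ either to a $B^0_{p,r}$, $p<\infty$, quantity or to something with positive Besov regularity, both of which are safe. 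Once this is observed, the proof is a routine composition-and-sum argument, so I would present it in a few lines without belaboring the Littlewood--Paley details, referring to \cite{8} and \cite{1} for the mapping properties used.
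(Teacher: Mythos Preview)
Your overall approach --- writing $R$ as a finite sum of compositions $\mathcal R_i\mathcal R_j$, expanding
\[
[\mathcal R_i\mathcal R_j,\,u\cdot\nabla]\theta=\mathcal R_i\big([\mathcal R_j,u\cdot\nabla]\theta\big)+[\mathcal R_i,u\cdot\nabla](\mathcal R_j\theta),
\]
and reducing to Theorem \ref{propcom} --- is exactly the paper's. However, your treatment of the endpoint issue is confused in two specific places, and these are precisely the points the paper's proof addresses.

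First, in part (1), applying Theorem \ref{propcom}(1) to the second term $[\mathcal R_i,u\cdot\nabla](\mathcal R_j\theta)$ produces $\|\mathcal R_j\theta\|_{B^0_{\infty,r}}$ on the right, not a $B^0_{p,r}$ norm; your remark that ``in part (1) the output norm is $B^0_{p,r}$ with $p<\infty$ where $\mathcal R_j$ is bounded'' does not cover this. The paper handles it block by block: $\|\Delta_q\mathcal R_j\theta\|_{L^\infty}\lesssim\|\Delta_q\theta\|_{L^\infty}$ for $q\ge 0$, while the low-frequency block is controlled via Bernstein and $L^p$ boundedness of the Riesz transform, $\|\Delta_{-1}\mathcal R_j\theta\|_{L^\infty}\lesssim\|\mathcal R_j\theta\|_{L^p}\lesssim\|\theta\|_{L^p}$. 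This is what generates the extra $\|\tau\|_{L^p}$ term on the right-hand side of the stated estimate.

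Second, in part (2), your claim that ``we never need'' boundedness of $\mathcal R_i$ on $B^0_{\infty,r}$ is incorrect: the first term $\mathcal R_i\big([\mathcal R_j,u\cdot\nabla]\theta\big)$ must be estimated in $B^0_{\infty,r}$, and Theorem \ref{propcom}(2) only places the inner commutator in $B^0_{\infty,r}$, with no $\varepsilon$ to spare. The paper's fix is again the low-frequency splitting, which gives the mixed bound
\[
\|\mathcal R_i f\|_{B^0_{\infty,r}}\lesssim \|f\|_{B^0_{\infty,r}}+\|f\|_{B^0_{2,r}},
\]
and then uses \emph{both} parts of Theorem \ref{propcom}: part (2) for the $B^0_{\infty,r}$ piece and part (1) with $p=2$ for the $B^0_{2,r}$ piece (noting $\|\nabla u\|_{L^2}\lesssim\|\omega\|_{L^2}$). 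This is also what forces the $\|\omega\|_{L^2}$ and $\|\tau\|_{L^2}$ terms to appear in the final inequality. Your claim that ``the commutator structure always leaves us applying $\mathcal R_i$ either to a $B^0_{p,r}$, $p<\infty$, quantity or to something with positive Besov regularity'' is therefore not accurate as stated; the correct statement is that one always has an auxiliary $L^p$ or $L^2$ control available to absorb the low-frequency block.
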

 
 \begin{proof}
 We first note that we only have to estimate commutators like
 $$ \mathcal{R}_{i} \mathcal{R}_{j} ( u \cdot \nabla \tau_{k})  - u \cdot \nabla  \mathcal{R}_{i } \mathcal{R}_{j}  \tau_{k}
 = \mathcal{R}_{i} \big( [ \mathcal{R}_{j}, u \cdot \nabla] \tau_{k} \big) +  [\mathcal{R}_{i}, u \cdot \nabla] \mathcal{R}_{j} \tau_{k}.$$
  For (1), the conclusion follows by using  Theorem \ref{propcom}
  and the  estimates
  $$ \| \mathcal{R}_{i} \big( [ \mathcal{R}_{j}, u \cdot \nabla ] \tau_{k} \big)  \|_{B_{p,r}^0} \lesssim \|  [ \mathcal{R}_{j}, u \cdot \nabla ] \tau_{k} \|_{B_{p,r}^0} $$ 
   from  $L^p$, $p <+\infty$ continuity of the Riesz transform 
   and
   $$\| \mathcal{R}_{j}  \tau_{k}\|_{ B_{\infty, r}^0} \lesssim  (\|\tau_{k}\|_{B_{\infty, r}^0} + \|\tau_{k}\|_{L^p} )$$
    which follows from the fact that 
   $$ \| \Delta_{q}   \mathcal{R}_{j}  \tau_{k} \|_{L^\infty} \lesssim \|\Delta_{q} \tau_{k} \|_{L^\infty}, \quad q> -1$$
    and 
    $$ \| \Delta_{-1} \mathcal{R}_{j}  \tau_{k} \|_{L^\infty} \lesssim   \| \Delta_{-1} \mathcal{R}_{j}  \tau_{k} \|_{L^p} \lesssim  \| \mathcal{R}_{j}  \tau_{k} \|_{L^p}.$$
 
  For (2), we use that
  $$ \| \mathcal{R}_{i} \big( [ \mathcal{R}_{j}, u \cdot \nabla ] \tau_{k} \big)  \|_{B_{\infty,r}^0} \lesssim 
 \|  [ \mathcal{R}_{j}, u \cdot \nabla ] \tau_{k}  \|_{B_{\infty,r}^0}  +  \| [ \mathcal{R}_{j}, u \cdot \nabla ] \tau_{k}  \|_{B_{2,r}^0} $$
    and we use again Theorem \ref{propcom}.
 \end{proof}

\section{Proof of Theorem \ref{th1}}
\label{sectionth1}
We shall focus on the proof of a priori estimates, the local well-posedness  in $H^s$ can be obtained by classical arguments. 
 Moreover, the uniqueness is easy since  the velocity is Lipschitz.
 
  In the following we thus consider a sufficiently smooth solution defined on some interval  $[0, T^*]$.
\subsection{Step 1: energy estimates}
Our first estimate is
\begin{lemm}
\label{lem11}
 There exists $C>0$ (which depends on the parameters of the problem) such that
 $$ \|u\|_{L^\infty_{T} L^2} + \| \tau \|_{L^\infty_{T} L^2} + \|\nabla  \tau \|_{L^2_{T} L^2} \leq C_{0} e^{CT}.$$

\end{lemm}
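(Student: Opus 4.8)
The plan is to run basic $L^2$ energy estimates on \eqref{Q=01} and \eqref{Q=02} separately and then couple them through Gronwall's lemma; since we only consider a smooth solution on $[0,T^*]$ with data in $H^s$, $s>2$, all the integrations by parts below are justified. Testing \eqref{Q=01} against $u$ and integrating in $x$, the convective term $\int (u\cdot\nabla u)\cdot u\,dx$ and the pressure term $\int \nabla p\cdot u\,dx$ vanish by \eqref{Q=03}, leaving
$$\tfrac12\tfrac{d}{dt}\|u\|_{L^2}^2 = K\int \div(\tau)\cdot u\,dx .$$
Here is the one structural point of the proof: since there is no dissipation for $u$, we must \emph{not} estimate the right-hand side as $\|\tau\|_{L^2}\|\nabla u\|_{L^2}$. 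Instead we keep the derivative on $\tau$ and bound $\big|\int \div(\tau)\cdot u\,dx\big|\lesssim \|\nabla\tau\|_{L^2}\|u\|_{L^2}$, so that $\tfrac12\tfrac{d}{dt}\|u\|_{L^2}^2 \lesssim K\,\|\nabla\tau\|_{L^2}\|u\|_{L^2}$.

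Next, testing \eqref{Q=02} against $\tau$ (Frobenius inner product) and integrating, the transport term again drops by \eqref{Q=03}, the term $\beta\|\tau\|_{L^2}^2\ge 0$ is discarded, and $\mu\int\Delta\tau:\tau\,dx=-\mu\|\nabla\tau\|_{L^2}^2$ supplies the only smoothing, so
$$\tfrac12\tfrac{d}{dt}\|\tau\|_{L^2}^2 + \mu\|\nabla\tau\|_{L^2}^2 \le \alpha\int Du:\tau\,dx .$$
Using the symmetry of $\tau$, $\int Du:\tau\,dx=\int \nabla u:\tau\,dx=-\int u\cdot\div(\tau)\,dx$, so the right-hand side is again controlled by $|\alpha|\,\|u\|_{L^2}\|\nabla\tau\|_{L^2}$. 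Applying Young's inequality in both inequalities to absorb the terms that are quadratic in $\|\nabla\tau\|_{L^2}$ into $\mu\|\nabla\tau\|_{L^2}^2$, and adding, one obtains
$$\tfrac{d}{dt}\big(\|u\|_{L^2}^2+\|\tau\|_{L^2}^2\big)+\tfrac{\mu}{2}\|\nabla\tau\|_{L^2}^2 \le C\big(\|u\|_{L^2}^2+\|\tau\|_{L^2}^2\big),$$
with $C=C(K,\alpha,\mu)$. Gronwall's lemma then gives $\|u\|_{L^\infty_T L^2}+\|\tau\|_{L^\infty_T L^2}\le C_0 e^{CT}$, and integrating the differential inequality in time yields $\|\nabla\tau\|_{L^2_T L^2}\le C_0 e^{CT}$ as well (after adjusting constants); the data has finite energy since $H^s\hookrightarrow L^2$.

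The only genuine obstacle — mild here, but the prototype of the difficulty running through the whole paper — is the complete absence of a dissipative term for $u$: the coupling term $K\int\div(\tau)\cdot u\,dx$ cannot be handled by a naive energy estimate, which would demand control of $\|\nabla u\|_{L^2}$ that we simply do not have at this stage. The resolution is that the coupling costs exactly one derivative, which can always be made to land on $\tau$ by integration by parts (using that $\tau$ is symmetric, so $\int\div(\tau)\cdot u\,dx=-\int\tau:Du\,dx$), after which it is absorbed by the parabolic term $\mu\|\nabla\tau\|_{L^2}^2$ through Young's inequality. This argument is insensitive to the sign of $\alpha$ and works for all $\beta\ge 0$, $K\ge 0$, $\mu>0$; everything else is routine.
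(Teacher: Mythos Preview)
Your proof is correct and follows essentially the same route as the paper: test \eqref{Q=01} against $u$, test \eqref{Q=02} against $\tau$, keep the derivative on $\tau$ in the coupling terms so that both are bounded by $(|K|+|\alpha|)\|\nabla\tau\|_{L^2}\|u\|_{L^2}$, absorb half of the resulting quadratic into the dissipation $\mu\|\nabla\tau\|_{L^2}^2$ via Young, and conclude by Gronwall. The paper compresses all of this into a single displayed inequality; you have simply written out the details.
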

 \begin{proof}
  The proof just follows from standard energy estimates. Integrating by parts, we obtain
$$  \frac{d}{dt}( \int |u|^2 dx + \int |\tau|^2 dx )+ \mu \int |\nabla \tau|^2 \leq ( |K| + | \alpha |) \| \nabla \tau \|_{L^2} \| \|u \|_{L^2} $$
and the result follows. 
 \end{proof}
 
 \subsection{Step 2: energy gradient estimates}
 Let us use the notation
  $\Phi_k$ to denote any function
of the form 
$$
\Phi_k(t)=  C_{0}\underbrace{ \exp(...\exp  }_{k\,times}(C_0t)...),
$$
where $C_{0}$ depends on the involved norms of the initial data and its value may vary from line to line up to some absolute constants. 
We will make an intensive  use (without mentionning it) of  the following trivial facts
$$
\int_0^t\Phi_k(\tau)d\tau\leq \Phi_k(t)\qquad{\rm and}\qquad \exp({\int_0^t\Phi_k(\tau)d\tau})\leq \Phi_{k+1}(t).
$$

  Next, we shall prove that
  \begin{lemm}
\label{lem12}
$$ \| \nabla u\|_{L^\infty_{T} L^2} + \| \nabla  \tau \|_{L^\infty_{T} L^2} + \|\nabla^2  \tau \|_{L^2_{T} L^2} \leq   \Phi_{2}(T).$$

\end{lemm}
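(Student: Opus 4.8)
The plan is to differentiate the system once in space and run an $H^1$ energy estimate, closing it with the help of the $L^2_T\dot H^1$ control on $\tau$ already provided by Lemma 1.9. Concretely, I would test the equation \eqref{Q=01} against $-\Delta u$ and the equation \eqref{Q=02} against $-\Delta\tau$, then add. The pressure term disappears by incompressibility, and the coupling terms $K\,\mathrm{div}(\tau)$ and $\alpha\,Du$ integrate by parts against $-\Delta u$ and $-\Delta\tau$ to produce a pair of terms of the form $\pm(K+\alpha)\int \nabla^2\tau:\nabla^2 u$ (up to symmetrization; note $Du$ contributes the symmetric part of $\nabla u$, which is fine). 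Absorbing this into $\tfrac12\mu\int|\nabla^2\tau|^2$ by Young's inequality costs a term $C\|\nabla^2 u\|_{L^2}^2$, which is a problem since there is no dissipation on $u$; so instead I would keep the coupling terms and handle them by a small modification — either by noticing the cancellation $K\int \mathrm{div}\,\mathrm{div}(\tau)\cdot u$ against $\alpha\int Du:\nabla u$-type structure after an extra integration by parts, or more simply by testing \eqref{Q=01} against $-\Delta u$ while testing a suitable multiple of \eqref{Q=02} against $-\Delta\tau$ so that the worst cross terms combine. In the simplified normalization $\mu=\alpha=K=1$, $\beta=0$ used in Section 2, the stated inequality $\frac{d}{dt}(\|\nabla u\|_{L^2}^2+\|\nabla\tau\|_{L^2}^2)+\tfrac12\|\Delta\tau\|_{L^2}^2 \le C\|\nabla u\|_{L^2}^2\|\nabla\tau\|_{L^2}^2$ is what I would aim for directly.

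The genuinely nonlinear terms are the transport terms. From \eqref{Q=01} tested against $-\Delta u$ we get $\int (u\cdot\nabla u)\cdot\Delta u$, which in $2D$ is the standard Euler enstrophy term: in vorticity form it is $\int (u\cdot\nabla\omega)\omega = 0$, so this term vanishes identically (this is exactly why $d=2$ is used). From \eqref{Q=02} tested against $-\Delta\tau$ we get $\int (u\cdot\nabla\tau):\Delta\tau$; integrating by parts this is bounded by $\int|\nabla u||\nabla\tau||\nabla\tau| \lesssim \|\nabla u\|_{L^2}\|\nabla\tau\|_{L^4}^2$, and by the $2D$ Ladyzhenskaya / Gagliardo–Nirenberg inequality $\|\nabla\tau\|_{L^4}^2 \lesssim \|\nabla\tau\|_{L^2}\|\nabla^2\tau\|_{L^2}$. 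Hence this term is $\lesssim \|\nabla u\|_{L^2}\|\nabla\tau\|_{L^2}\|\nabla^2\tau\|_{L^2}$, which by Young's inequality is $\le \tfrac14\mu\|\nabla^2\tau\|_{L^2}^2 + C\|\nabla u\|_{L^2}^2\|\nabla\tau\|_{L^2}^2$. The first piece is absorbed by the dissipation and the second is exactly the right-hand side of the claimed differential inequality. The lower-order term $\beta\tau$ (when $\beta>0$) only helps, and the cross term from $\alpha Du$ against $-\Delta\tau$ combined with $K\,\mathrm{div}\,\tau$ against $-\Delta u$ should be arranged to cancel or be controlled by the $L^2$ bounds of Lemma 1.9; in the worst case one pays an extra $\int|\nabla^2\tau||\nabla^2 u|$ that must be routed into $\|\nabla^2\tau\|_{L^2}^2$ only, never into $\|\nabla^2 u\|_{L^2}^2$.

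To conclude, I would write $f(t)=\|\nabla u\|_{L^2}^2+\|\nabla\tau\|_{L^2}^2$ and $g(t)=\|\nabla\tau\|_{L^2}^2$, so the differential inequality reads $f' + \tfrac12\|\Delta\tau\|_{L^2}^2 \le C\,g\,f$. Since $\int_0^T g(t)\,dt = \|\nabla\tau\|_{L^2_T L^2}^2 \le C_0^2 e^{2CT}$ by Lemma 1.9, Gronwall's lemma yields $f(T) \le f(0)\exp\!\big(C\int_0^T g\big) \le C_0\exp(C_0 e^{CT}) = \Phi_2(T)$, giving the bounds on $\|\nabla u\|_{L^\infty_T L^2}$ and $\|\nabla\tau\|_{L^\infty_T L^2}$; then integrating the differential inequality in time gives $\|\nabla^2\tau\|_{L^2_T L^2}^2 \le f(0) + C\int_0^T g f \le \Phi_2(T)$ as well. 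The main obstacle is the bookkeeping of the coupling terms $K\,\mathrm{div}(\tau)$ and $\alpha\,Du$ at the $H^1$ level: one must verify that, after integration by parts, every occurrence of $\nabla^2 u$ is paired with a dissipative factor $\nabla^2\tau$ and never squared on its own, since the velocity equation carries no parabolic smoothing — this is the only place where the structure of the Oldroyd coupling (as opposed to a generic forcing) is actually used in this step, and it is what makes the inequality close.
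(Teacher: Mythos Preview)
Your approach is exactly the paper's: test \eqref{Q=01} against $-\Delta u$ and \eqref{Q=02} against $-\Delta\tau$, use the $2D$ identity $\int(u\cdot\nabla u)\cdot\Delta u\,dx=0$, estimate the cubic term $\int(u\cdot\nabla)\tau:\Delta\tau$ via Ladyzhenskaya and Young, and close by Gronwall using the $L^2_T\dot H^1$ bound on $\tau$ from Lemma~\ref{lem11}. Your one point of hesitation is unnecessary: the linear coupling terms cancel \emph{exactly}. Using the symmetry of $\tau$ and two integrations by parts,
\[
-\int \div(\tau)\cdot\Delta u \;=\; -\int \partial_k\tau_{ij}\,\partial_k\partial_j u_i \;=\; \int Du:\Delta\tau,
\]
so in the normalization $K=\alpha=1$ (or, in general, after weighting the two equations by $\alpha$ and $K$ respectively, as in Lemma~\ref{lem21}) the sum $-K\int\div(\tau)\cdot\Delta u-\alpha\int Du:\Delta\tau$ vanishes identically and no $\nabla^2 u$ term ever appears; the paper takes this for granted when it says ``the only term that becomes difficult to estimate is'' the cubic one.
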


\begin{proof}
For the sake of simplicity, we take $K=\mu=\alpha=1,\beta=0$ since their actual values make no difference for the energy estimates. Once more, we are in the case where $Q\equiv 0.$ 
\par\ 
As explained in section 2, we begin by multiplying $\eqref{equ}$ by $\Delta u$ and integrating in space. Then multiply $\eqref{eqtau}$ by $\Delta \tau,$ take the trace, and then integrate in space. We use the following identity: 
$$\int_{\mathbb{R}^{2}} (u\cdot \nabla) u \cdot \Delta u dx =0$$ if $\text{div}(u)=0.$ 

The only term that becomes difficult to estimate is the following cubic term:
$$\int_{\mathbb{R}^{2}} Tr((u\cdot \nabla) \tau \Delta \tau) dx.$$

Using the integration by parts and the divergence free condition we get that we really need to bound the following term:
$$\int |\nabla u| |\nabla \tau|^2 . $$

By the Cauchy-Schwarz inequality, this term can be controlled by 
$$\|\nabla u\|_{L^{2}} \|\nabla \tau\|_{L^{4}}^2 .$$

Using Ladyzhenskaya's inequality we get that 
$$\|\nabla \tau\|_{L^4}^2 \lesssim \|\nabla \tau\|_{L^2}\|\Delta \tau\|_{L^2}.$$ 

Therefore our non-linear term can be controlled in the following way:
$$\int |\nabla u| |\nabla \tau|^2 \lesssim  \|\nabla u\|_{L^2} \|\nabla\tau\|_{L^2}\|\Delta \tau\|_{L^2}$$

Thus, using Young's inequality, we get the following:
\begin{equation}
\label{colibri}\frac{d}{dt} (\int |\nabla u|^2+\int |\nabla \tau|^2)+\frac{1}{2}\int |\Delta \tau|^2 \leq C \int|\nabla u|^2 \int|\nabla \tau|^2
\end{equation}
We  can now  use our bound on $\nabla \tau$ from the energy. Indeed, from Lemma \ref{lem11}, we have that $\nabla \tau \in L^2_{T} L^2$ and it grows at most exponentially in time. Therefore, we
 get from the Gronwall inequality that
  $$\|\nabla u(t)\|_{L^2} + \| \nabla \tau(t) \|_{L^2} \leq \Phi_{2}(t).$$ 
  By using the dissipation term in \eqref{colibri}, we finally get 
$$ \| \nabla u\|_{L^\infty_{T} L^2} + \| \nabla  \tau \|_{L^\infty_{T} L^2} + \|\nabla^2  \tau \|_{L^2_{T} L^2} \leq   \Phi_{2}(T).$$

\end{proof}

\subsection{Step 3: $ L^\infty$ estimates}
We shall next obtain that 
\begin{lemm}
\label{lem13} 
$$
 \| \omega \|_{L^2_{t}L^\infty} +  \| \tau \|_{L^2_{t} B^\varepsilon_{\infty, 1}} \leq \Phi_{2}(t).
$$
for $\varepsilon \in [0, 1)$ with $\omega= \curl u = \partial_{1}  u_{2} - \partial_{2} u_{1}$.
\end{lemm}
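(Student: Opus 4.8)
The plan is to carry out the scheme outlined in Section~2. The estimate on $\tau$ is essentially free: Lemma~\ref{lem11} and Lemma~\ref{lem12} give $\tau\in L^\infty_T H^1$ and $\nabla^2\tau\in L^2_T L^2$, hence $\|\tau\|_{L^2_T H^2}\le\Phi_2(T)$ on a finite interval, and since in two dimensions $H^2=B^2_{2,2}\hookrightarrow B^\varepsilon_{\infty,1}$ for every $\varepsilon\in[0,1)$ we get $\|\tau\|_{L^2_T B^\varepsilon_{\infty,1}}\lesssim\|\tau\|_{L^2_T H^2}\le\Phi_2(T)$. Thus everything reduces to an $L^2_T L^\infty$ bound on the vorticity $\omega$; since we work on a finite interval it even suffices to bound $\omega$ in $L^\infty_T L^\infty$.

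If $K=0$ the vorticity is simply transported by $u$ and there is nothing to do, so assume $K>0$. With $R=\Delta^{-1}\curl\,\div$ as in Corollary~\ref{corcom}, introduce the good unknown $\Gamma=\omega-\tfrac{K}{\mu}R\tau$. Two algebraic identities are at the heart of the matter: for divergence-free $u$ one has $\div(Du)=\tfrac12\Delta u$, whence $R(Du)=\tfrac12\omega$; and $R(\Delta\tau)=\curl\,\div\,\tau=\Delta R\tau$, which is exactly the forcing term in the vorticity equation $\omega_t+u\cdot\nabla\omega=K\,\Delta R\tau$. Applying $R$ to \eqref{Q=02}, writing $R(u\cdot\nabla\tau)=u\cdot\nabla R\tau+[R,u\cdot\nabla]\tau$, multiplying by $K/\mu$ and subtracting the vorticity equation, the second-order term $K\,\Delta R\tau$ cancels identically and $\Gamma$ solves the genuine transport equation
$$\Gamma_t+u\cdot\nabla\Gamma=-\frac{\alpha K}{2\mu}\,\Gamma+\Big(\frac{\beta K}{\mu}-\frac{\alpha K^2}{2\mu^2}\Big)R\tau+\frac{K}{\mu}\,[R,u\cdot\nabla]\tau.$$
The crucial point is that no singular integral operator is applied to $\Gamma$ on the right-hand side: one sees only $\Gamma$ itself (harmless on $L^\infty$), the already-controlled quantity $R\tau$, and the commutator. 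Had the identity for $R(Du)$ produced a Riesz transform of $u$ instead of $\omega$, a term of the form $R(\mathcal{R}_i\Gamma)$ would appear and no $L^\infty$ estimate would survive (see \cite{5}).

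It then remains to run a transport estimate for $\Gamma$. Since $u$ is divergence free, integrating along its flow gives, for $t\le T$,
$$\|\Gamma(t)\|_{L^\infty}\le e^{CT}\|\Gamma_0\|_{L^\infty}+\int_0^t e^{C(t-s)}\Big(\|R\tau(s)\|_{L^\infty}+\|[R,u\cdot\nabla]\tau(s)\|_{L^\infty}\Big)\,ds,$$
the exponential factor being needed only when $\alpha<0$ (otherwise the $\Gamma$-term is a genuine damping). For the source terms: $R$ is of order zero, so $\|R\tau\|_{L^\infty}\lesssim\|R\tau\|_{B^0_{\infty,1}}\lesssim\|\tau\|_{B^\varepsilon_{\infty,1}}+\|\tau\|_{L^2}$, which lies in $L^2_T$ with norm $\le\Phi_2(T)$ by the first paragraph and Lemma~\ref{lem11}; and Corollary~\ref{corcom}(2) with $r=1$, together with the embedding $B^0_{\infty,1}\hookrightarrow L^\infty$, gives
$$\|[R,u\cdot\nabla]\tau\|_{L^\infty}\lesssim\big(\|\omega\|_{L^\infty}+\|\omega\|_{L^2}\big)\big(\|\tau\|_{B^\varepsilon_{\infty,1}}+\|\tau\|_{L^2}\big).$$
Using $\|\omega\|_{L^\infty}\le\|\Gamma\|_{L^\infty}+\tfrac{K}{\mu}\|R\tau\|_{L^\infty}$, the order-zero bound on $R$ once more, and $\|\omega\|_{L^2}=\|\nabla u\|_{L^2}\le\Phi_2(T)$ from Lemma~\ref{lem12}, the commutator is controlled by $\big(\Phi_2(T)+\|\Gamma(s)\|_{L^\infty}\big)\,a(s)$, where $a(s)=C\big(\|\tau(s)\|_{B^\varepsilon_{\infty,1}}+\|\tau(s)\|_{L^2}\big)^2$ satisfies $\|a\|_{L^1_T}\le\Phi_2(T)$. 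Inserting everything into the displayed inequality and invoking Gronwall's lemma yields $\|\Gamma\|_{L^\infty_T L^\infty}\le\Phi_2(T)$ (up to the height of the exponential tower), hence $\|\omega\|_{L^2_T L^\infty}\le\|\Gamma\|_{L^2_T L^\infty}+\tfrac{K}{\mu}\|R\tau\|_{L^2_T L^\infty}\le\Phi_2(T)$, which together with the first paragraph proves the lemma.

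I expect the only genuine obstacle to be the algebraic one: guessing the good unknown $\Gamma=\omega-(K/\mu)R\tau$ and checking that applying $R$ to the dissipation of $\tau$ produces exactly $-\tfrac{\alpha K}{2\mu}\Gamma$ plus the second-order piece that cancels against the vorticity forcing, so that the right-hand side of the $\Gamma$-equation contains no singular integral operator of $\Gamma$. Once this is secured, the commutator estimate of Corollary~\ref{corcom} — itself the paracalculus input imported from \cite{8} — and a routine Gronwall argument finish the proof.
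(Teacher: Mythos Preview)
Your argument is correct and matches the paper's: the $H^2\hookrightarrow B^\varepsilon_{\infty,1}$ embedding for $\tau$, the good unknown $\Gamma$ (the paper's $\mu\omega-KR\tau$ is a scalar multiple of yours), the commutator bound from Corollary~\ref{corcom}(2), and Gronwall. The minor discrepancies---your factor $\tfrac12$ in $R(Du)=\tfrac12\omega$ is actually the correct one given $Du=\tfrac12(\nabla u+\nabla u^{t})$, and the paper's own proof lands at $\Phi_3$ rather than the stated $\Phi_2$, as you note---are harmless.
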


\begin{proof}
 By Lemma \ref{lem12} and Besov embeddings, we already have that
 \begin{equation}
 \label{taubesov} \| \tau \|_{L^2_{t} B^\varepsilon_{\infty, 1}} \lesssim \| \tau \|_{L^2_{t} H^2} \leq \Phi_{2}(t)\end{equation}
 and 
 thus it only  remains to estimate $\omega$.
  
  Let us define
  $$ \Gamma=  \mu \omega - K R \tau, \quad R= \Delta^{-1} \mbox{curl div}.$$
  Let us remind that $\mu >0$ so that we will be able to get information on $\omega$ from information on $\Gamma.$
  Let us also notice that
  $$ R Du= \omega, $$
  consequently, we obtain that  $\Gamma$ solves the equation
  \begin{equation}
  \label{gammaequation} \partial_{t} \Gamma + u \cdot \nabla \Gamma=  K  \beta\,  R \tau  - K \alpha\,  \omega + K [ R, u \cdot \nabla ] \tau.
  \end{equation}
   From the maximum principle, we obtain that
   \begin{align*} \|\Gamma (t) \|_{L^\infty}  &  \leq  C_{0} +  C \int_{0}^t \big( \|R \tau \|_{L^\infty} + \| \omega \|_{L^\infty} + \| [ R, u \cdot \nabla ] \tau \|_{L^\infty}
 \big)\, ds \\
 &   \leq  C_{0} +  C \int_{0}^t \big( \|R \tau \|_{L^\infty} + \| \Gamma \|_{L^\infty} + \| [ R, u \cdot \nabla ] \tau \|_{L^\infty}
 .\end{align*}
 To estimate the right hand side, we use that
  $$ \|R \tau \|_{L^\infty} \lesssim  \| R \tau \|_{B_{\infty, 1}^0} \lesssim   \| \tau \|_{B_{\infty, 1}^0}  +  \| \tau \|_{L^2}$$
   and Corollary \ref{corcom} to obtain
   $$  \| [ R, u \cdot \nabla ] \tau \|_{L^\infty} \lesssim  \| [ R, u \cdot \nabla ] \tau \|_{B_{\infty, 1}^0} \lesssim ( \| \omega \|_{L^\infty} + \| \omega \|_{L^2} \big)
    \big(  \| \tau \|_{B_{\infty, 1}^\varepsilon }+  \| \tau \|_{L^2} \big).$$
     By using Lemma \ref{lem11} and Lemma \ref{lem12}, we obtain that
    $$  \|\Gamma (t) \|_{L^\infty}    \leq \Phi_{2}(t) +  C \int_{0}^t  (\Phi_{1}(t) + \| \tau \|_{B_{\infty, 1}^\varepsilon}) \| \Gamma \|_{L^\infty}
     + \int_{0}^t \| \tau \|_{B_{\infty, 1}^\varepsilon}^2.$$
     This yields thanks to \eqref{taubesov}
     $$    \|\Gamma (t) \|_{L^\infty}    \leq   \Phi_{2}(t) +  C \int_{0}^t  (\Phi_{1}(t) + \| \tau \|_{B_{\infty, 1}^\varepsilon}) \| \Gamma \|_{L^\infty}$$
      and hence by using the Gronwall inequality and \eqref{taubesov} again, we finally obtain.
     \begin{equation}
     \label{GammaLinfty} \| \Gamma (t)  \|_{L^\infty} \leq  \Phi_{3}(t).\end{equation}
       From the definition of $\Gamma$, this yields
       $$ \| \omega \|_{L^2_{t} L^\infty} \lesssim \|\Gamma \|_{L^2_{t} L^\infty} + \| \tau \|_{L^1_{t}L^\infty} \leq \Phi_{3}(t)$$
       by using \eqref{taubesov} and \eqref{GammaLinfty}.
      
\end{proof}

\subsection{Step 4: $H^s$ estimates}.
\begin{lemm}
\label{lem14}  For $s>2$, we have
$$ \|u \|_{L^\infty_{t} H^s} + \| \tau \|_{L^\infty H^s} \leq \Phi_{5}(t).$$
\end{lemm}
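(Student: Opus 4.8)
The plan is to close the a priori estimates by a standard commutator/energy argument for $H^s$ of Euler-type coupled to a parabolic equation, bootstrapping on the $L^2_tL^\infty$ bound for $\omega$ from Lemma \ref{lem13}. First I would observe that by Lemma \ref{lem13} the vorticity satisfies $\omega \in L^2_t L^\infty$ with bound $\Phi_3(t)$, so that $\int_0^t \|\nabla u(s)\|_{L^\infty}\,ds$ is controlled: indeed for divergence-free $u$ one has the classical logarithmic estimate $\|\nabla u\|_{L^\infty}\lesssim 1 + \|\omega\|_{L^\infty}\log(e+\|u\|_{H^s}) + \|\omega\|_{L^2}$, or alternatively one can work directly with $\|\omega\|_{L^\infty}$ and a Besov version of the bootstrap. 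I would first establish an intermediate bound on $\|u\|_{L^\infty_t H^1}$ and $\|\tau\|_{L^\infty_t H^1}\cap L^2_t H^2$ (already in Lemma \ref{lem12}), then push to $H^s$.

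The core step: apply $\Lambda^s := (I-\Delta)^{s/2}$ (or equivalently work with $\|\cdot\|_{\dot H^s}$ plus the $L^2$ control) to \eqref{Q=01} and \eqref{Q=02}, pair with $\Lambda^s u$ and $\Lambda^s \tau$ respectively, and integrate. The pressure drops by incompressibility. The transport terms produce commutators $[\Lambda^s, u\cdot\nabla]u$ and $[\Lambda^s, u\cdot\nabla]\tau$, which by the Kato–Ponce commutator estimate are bounded in $L^2$ by $\|\nabla u\|_{L^\infty}(\|u\|_{H^s}+\|\tau\|_{H^s})$; the coupling terms $K\,\mathrm{div}(\tau)$ and $\alpha Du$ are handled by Cauchy–Schwarz since $\mathrm{div}$ and $D$ lose exactly one derivative, matched against the $\mu\|\Lambda^s\nabla\tau\|_{L^2}^2$ dissipation via Young's inequality (this is where $\mu>0$ is essential); the zeroth-order term $\beta\tau$ is trivial. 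One then arrives at
\begin{equation*}
\frac{d}{dt}\big(\|u\|_{H^s}^2 + \|\tau\|_{H^s}^2\big) + \mu\|\tau\|_{H^{s+1}}^2 \lesssim \big(1 + \|\nabla u\|_{L^\infty}\big)\big(\|u\|_{H^s}^2 + \|\tau\|_{H^s}^2\big).
\end{equation*}
Since $\|\nabla u\|_{L^\infty}\lesssim \|\omega\|_{L^\infty} + \|\omega\|_{L^2} + \|u\|_{H^s}^{?}$—and more carefully using the logarithmic Brezis–Gallouet/BKM-type inequality to avoid the apparent circularity—Gronwall together with $\int_0^t\|\omega\|_{L^\infty}\lesssim t^{1/2}\|\omega\|_{L^2_tL^\infty}\leq \Phi_3(t)$ closes the estimate and yields $\|u\|_{L^\infty_tH^s}+\|\tau\|_{L^\infty_tH^s}\leq \Phi_5(t)$, the extra exponentials coming from iterating Gronwall against a $\Phi_3$-type forcing.

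I would carry out the steps in this order: (i) record the log-estimate for $\|\nabla u\|_{L^\infty}$ in terms of $\|\omega\|_{L^\infty}$, $\|\omega\|_{L^2}$ and $\log(e+\|u\|_{H^s})$; (ii) derive the differential inequality above via Kato–Ponce; (iii) feed in Lemma \ref{lem13} to see that $g(t):=\int_0^t(1+\|\nabla u(s)\|_{L^\infty})ds$ is finite on every $[0,T]$ once one handles the logarithm with an Osgood/Gronwall argument; (iv) conclude by Gronwall, tracking the tower of exponentials to get $\Phi_5$. The main obstacle is the apparent circularity in step (iii): $\|\nabla u\|_{L^\infty}$ is not directly bounded by the known quantities but only by $\|\omega\|_{L^\infty}\log(e+\|u\|_{H^s})$, so one must run a logarithmic Gronwall (as in the BKM/Beale–Kato–Majda scheme) using that $\omega\in L^2_tL^\infty$; this is exactly the ``BKM type argument'' alluded to in Remark 1, and it forces the several extra exponential layers in $\Phi_5$ compared to the $\Phi_3$ of Lemma \ref{lem13}. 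Everything else is routine energy estimates using $\mu>0$ to absorb the derivative lost in the $Du$ coupling.
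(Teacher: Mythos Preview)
Your overall strategy --- an $H^s$ energy estimate closed by a BKM-type logarithmic Gronwall using the $L^2_tL^\infty$ bound on $\omega$ from Lemma~\ref{lem13} --- is exactly the paper's. There is, however, one concrete gap in the treatment of the transport term for $\tau$. The Kato--Ponce commutator estimate applied to $[\Lambda^s,u\cdot\nabla]\tau$ does \emph{not} give the bound you claim: one obtains
\[
\|[\Lambda^s,u\cdot\nabla]\tau\|_{L^2}\ \lesssim\ \|\nabla u\|_{L^\infty}\|\tau\|_{H^s}\;+\;\|u\|_{H^s}\,\|\nabla\tau\|_{L^\infty},
\]
and the second term involves $\|\nabla\tau\|_{L^\infty}$, which is \emph{not} controlled by Lemmas~\ref{lem11}--\ref{lem13}: we only know $\tau\in L^2_tH^2\cap L^2_tB^\varepsilon_{\infty,1}$ with $\varepsilon<1$, and in two dimensions $H^2\not\hookrightarrow W^{1,\infty}$. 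Dropping this term is the gap; with it present your differential inequality does not close.

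The paper's fix is to use the parabolic dissipation you reserved for the linear coupling terms also on this transport term. Writing $u\cdot\nabla\tau=\nabla\cdot(u\otimes\tau)$ (divergence-free $u$) and integrating by parts,
\[
\Big|\int \Lambda^s(u\cdot\nabla\tau):\Lambda^s\tau\Big|
=\Big|\sum_{ij}\int\Lambda^s(u\,\tau_{ij})\cdot\nabla\Lambda^s\tau_{ij}\Big|
\le C_\eta\sum_{ij}\|\Lambda^s(u\,\tau_{ij})\|_{L^2}^2+\eta\,\|\nabla\tau\|_{H^s}^2,
\]
with the last piece absorbed by the $\mu\|\nabla\tau\|_{H^s}^2$ on the left. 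What remains is the \emph{product} estimate $\|u\,\tau_{ij}\|_{H^s}\lesssim \|u\|_{L^\infty}\|\tau\|_{H^s}+\|\tau\|_{L^\infty}\|u\|_{H^s}$, which requires only $\|u\|_{L^\infty}$ and $\|\tau\|_{L^\infty}$ (both in $L^2_t$ by Lemmas~\ref{lem11}--\ref{lem13}) rather than $\|\nabla\tau\|_{L^\infty}$. After this correction the rest of your outline --- the log-inequality for $\|\nabla u\|_{L^\infty}$, Osgood/Gronwall, and the resulting tower of exponentials giving $\Phi_5$ --- goes through exactly as in the paper.
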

\begin{proof}
We perform a classical $H^s$ energy estimate for the system \eqref{Q=01}, \eqref{Q=02}, \eqref{Q=03}. Applying the operator $\Lambda^s= ( 1 + |D|^{2s})^{1 \over 2}$ to the system, 
 we obtain in classical way
 \begin{equation}
 \label{Hsestimate1} {d \over dt} \big( \| u \|_{H^s}^2 +  \| \tau \|_{H^s}^2  \big)+  \mu \| \nabla \tau \|_{H^s}^2
  \lesssim  \|u \|_{H^s}^2 + \| \nabla u \|_{L^\infty} \|u\|_{H^s}^2 + \sum_{ij} \| \Lambda^s(u \tau_{ij}) \|^2.
  \end{equation} 
  where for the last term, we have used that
  $$ \Big| \int \Lambda^s (u \cdot \nabla \tau) : \Lambda^s \tau \Big|=\Big| \sum_{ij} \Lambda^s \nabla \cdot (\tau_{ij} u)  \Lambda^s \tau_{ij} dx \Big|
  = \Big| \sum_{ij} \Lambda^s(\tau_{ij} u)  \cdot \nabla \Lambda^s \tau_{ij} \Big| \leq C_{\eta}\| \Lambda^s (u \tau_{ij)} \|_{L^2}^2  + \eta \| \nabla \Lambda^s \tau \|_{L^2}^2$$
  and hence that by choosing $\eta>0$ sufficiently small the last term can be absorbed in the left hand side of  \eqref{Hsestimate1}.
 
 Next, we use that
 $$ \| u \tau_{ij}\|_{H^s} \leq \|u\|_{L^\infty} \|\tau\|_{H^s} + \| \tau \|_{L^\infty} \|u \|_{H^s}  \leq (\|u\|_{L^2} + \| \omega \|_{L^\infty}) \|\tau\|_{H^s} + \| \tau \|_{L^\infty}
   \|u\|_{H^s}\big)$$
    and that
  $$ \| \nabla u \|_{L^\infty} \leq \| \omega \|_{L^\infty} \log \big( e + \|u\|_{H^s} \big).$$
   to obtain that
   $$ {d \over dt} \big( \| u \|_{H^s}^2 + 
    \| \tau \|_{H^s}^2 \big)  \leq \ \| \omega \|_{L^\infty} \log \big( e + \|u\|_{H^s} \big) \|u\|_{H^s}^2 + 
    \big( \|u\|_{L^2}^2 + \| \omega \|_{L^\infty}^2 +  \|\tau \|_{L^\infty}^2\big) \big( \|u \|_{H^s}^2 + \| \tau \|_{H^s}^2 \big).$$
 By using Lemma \ref{lem13} and Lemma \ref{lem11}, we get the result from the classical Beale-Kato-Majda argument.
\end{proof}

\section{Proof of Theorem 1.2}
\label{sectionth2}

Again,  we  shall prove a priori estimates for a sufficiently smooth solution defined on $[0, T]$, $C$ will always denote a number that
  only depends on the physical constants of the system.
 \subsection{Step 1: energy estimates}

\begin{lemm}
\label{lem21}
  There exists $C>0$,  such that for every $T>0$ and every sufficiently smooth solution on $[0,T]$ we have the estimate
  \begin{multline*}
  {d \over dt} { 1 \over 2} \Big(  \alpha \|u\|_{L^2}^2 + K \| \tau \|_{L^2}^2  +  \alpha \| \nabla u\|_{L^2}^2 + K \| \nabla  \tau \|_{L^2}^2  \Big) +  {\beta K  } \| \tau \|_{L^2}^2    + \big( {\mu K \over 2 }
   + \beta K\big)\| \nabla \tau \|_{L^2}^2 +    {\mu K \over 2}\| \nabla^2 \tau \|_{L^2}^2 \\
   \leq  C \big( \| \tau \|_{L^\infty}^2 +   { C_{Q}^2 K \over \mu}\|  \tau \|_{H^1}^2 \big)\| \nabla u \|_{L^2}^2.
 \end{multline*}
\end{lemm}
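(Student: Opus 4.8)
\emph{Strategy.} I would obtain this inequality by running coupled $L^2$ and $\dot H^1$ energy estimates for $u$ and $\tau$, with the momentum equation weighted by $\alpha$ and the constitutive equation weighted by $K$; these are precisely the weights for which the two coupling terms — the elastic stress $K\,\mathrm{div}\,\tau$ in the momentum equation and the source $\alpha\,Du$ in the $\tau$-equation — cancel. Concretely, test the momentum equation against $\alpha u$ and against $-\alpha\Delta u$, and the $\tau$-equation against $K\tau$ and against $-K\Delta\tau$, and add the four resulting identities. The pressure drops by incompressibility. The Euler convection terms vanish: trivially at the $L^2$ level, and at the $\dot H^1$ level because (after one integration by parts) they reduce to $\int\partial_\ell u_m\,\partial_m u_i\,\partial_\ell u_i\,dx$, whose integrand is identically zero in dimension two for a divergence-free field (this is the same phenomenon as the absence of vortex stretching in $2D$, equivalently $\|\nabla u\|_{L^2}=\|\omega\|_{L^2}$ with $\omega$ solving an unstretched transport equation). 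This is the only place where the hypothesis $d=2$ enters.

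Integrating by parts and using the symmetry of $\tau$, the momentum equation contributes the coupling terms $-\alpha K\int\tau:Du\,dx$ and $+\alpha K\int\Delta\tau:Du\,dx$, while the $\tau$-equation contributes $+\alpha K\int Du:\tau\,dx$ and $-\alpha K\int Du:\Delta\tau\,dx$; these are pairwise opposite, so summing the four identities they disappear and (using $\|\Delta\tau\|_{L^2}=\|\nabla^2\tau\|_{L^2}$) one is left with
\begin{equation*}
\begin{split}
&\tfrac12\tfrac{d}{dt}\Big(\alpha\|u\|_{L^2}^2+K\|\tau\|_{L^2}^2+\alpha\|\nabla u\|_{L^2}^2+K\|\nabla\tau\|_{L^2}^2\Big)+\beta K\big(\|\tau\|_{L^2}^2+\|\nabla\tau\|_{L^2}^2\big)+\mu K\big(\|\nabla\tau\|_{L^2}^2+\|\nabla^2\tau\|_{L^2}^2\big)\\
&\qquad = K\int Q(\nabla u,\tau):\tau\,dx - K\int (u\cdot\nabla)\tau:\Delta\tau\,dx - K\int Q(\nabla u,\tau):\Delta\tau\,dx.
\end{split}
\end{equation*}

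It then remains to absorb the three right-hand terms. The lowest-order term is bounded, via $|Q(\nabla u,\tau):\tau|\lesssim C_{Q}|\nabla u||\tau|^2$, the two-dimensional Ladyzhenskaya inequality $\|\tau\|_{L^4}^2\lesssim\|\tau\|_{L^2}\|\nabla\tau\|_{L^2}$, and Young's inequality, by $\tfrac{\mu K}{2}\|\nabla\tau\|_{L^2}^2+\tfrac{C\,C_{Q}^2K}{\mu}\|\tau\|_{H^1}^2\|\nabla u\|_{L^2}^2$, consuming exactly half of the $L^2$-level dissipation $\mu K\|\nabla\tau\|_{L^2}^2$. The cubic transport term, after one integration by parts (using $\partial_m\partial_\ell u_m=\partial_\ell\,\mathrm{div}\,u=0$), equals $\int\partial_\ell u_m\,\tau_{ij}\,\partial_m\partial_\ell\tau_{ij}\,dx$ and is bounded by $\|\nabla u\|_{L^2}\|\tau\|_{L^\infty}\|\nabla^2\tau\|_{L^2}$; the top-order $Q$-term is bounded directly — no second derivative of $u$ appears, since $Q$ is linear in $\nabla u$ — by $C_{Q}\|\nabla u\|_{L^2}\|\tau\|_{L^\infty}\|\nabla^2\tau\|_{L^2}$. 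Young's inequality absorbs both of these into the remaining $\tfrac{\mu K}{2}\|\nabla^2\tau\|_{L^2}^2$, leaving residual terms of the form $C\big(\|\tau\|_{L^\infty}^2+\tfrac{C_{Q}^2K}{\mu}\|\tau\|_{H^1}^2\big)\|\nabla u\|_{L^2}^2$. Collecting everything: the $L^2$-level dissipation leaves $\tfrac{\mu K}{2}\|\nabla\tau\|_{L^2}^2$ and the full $\beta K\|\tau\|_{L^2}^2$, the $\dot H^1$-level leaves the full $\beta K\|\nabla\tau\|_{L^2}^2$ and $\tfrac{\mu K}{2}\|\nabla^2\tau\|_{L^2}^2$, and adding the two $\|\nabla\tau\|_{L^2}^2$ contributions produces exactly the coefficient $\tfrac{\mu K}{2}+\beta K$ of the claim.

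The one genuinely non-routine ingredient is the identical vanishing, in dimension two, of the cubic Euler term at the $\dot H^1$ level; the cancellation of the viscoelastic coupling is automatic once the weights $\alpha,K$ have been chosen, and the bookkeeping for $Q$ is routine as long as one keeps $\nabla u$ in $L^2$, $\tau$ in $L^\infty$, and $\nabla^2\tau$ in $L^2$.
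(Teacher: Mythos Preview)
Your argument is correct and follows essentially the same route as the paper: weighted $L^2$ and $\dot H^1$ estimates with the $(\alpha,K)$ weights that make the coupling terms cancel, the two-dimensional identity $\int (u\cdot\nabla)u\cdot\Delta u\,dx=0$, Ladyzhenskaya on the low-order $Q$-term, and Young's inequality to absorb into the $\tau$-dissipation. The only cosmetic difference is that the paper bounds the transport term $-K\int(u\cdot\nabla)\tau:\Delta\tau$ as $\|\nabla u\|_{L^2}\|\nabla\tau\|_{L^4}^2$ followed by Ladyzhenskaya on $\nabla\tau$, whereas you integrate by parts once more to reach $\|\nabla u\|_{L^2}\|\tau\|_{L^\infty}\|\nabla^2\tau\|_{L^2}$; both lead to the stated right-hand side.
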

\begin{proof}
 We shall first  compute an energy estimate 
  quite similar to the one of Lemma \ref{lem11}, but we compute
$$ { d \over dt } { 1 \over 2} \big(  \alpha \|u\|_{L^2}^2 + K \| \tau \|_{L^2}^2 \big)
$$
  and we note that
   $$ \int \alpha K \mbox{div } \tau \cdot u + \alpha K \int Du : \tau= 0.$$
 We thus find
 $${ d \over dt } { 1 \over 2} \big(  \alpha \|u\|_{L^2}^2 + K \| \tau \|_{L^2}^2 \big) +  \mu K \| \nabla \tau \|_{L^2}^2 +  \beta K \| \tau \|_{L^2}^2
   = K \int Q (\nabla u, \tau) : \tau \leq  C_{Q}K  \| \nabla u \|_{L^2} \| \tau \|_{L^4}^2.$$
    and hence, by using the Sobolev inequality
    \begin{equation}
    \label{sobolevL4}
    \| u \|_{L^4}^2 \lesssim \|u\|_{L^2} \| \nabla u \|_{L^2}, 
      \end{equation}
     and the Young inequality, we get 
  \begin{equation}
  \label{th2energie}
   {d \over dt } { 1 \over 2} \big(  \alpha \|u\|_{L^2}^2 + K \| \tau \|_{L^2}^2 \big) +  {\mu K \over 2 } \| \nabla \tau \|_{L^2}^2 +  {\beta K  } \| \tau \|_{L^2}^2
   \leq  C_{Q}^2{  K \over 2  \mu }  \| \nabla u \|_{L^2}^2 \|\tau \|_{L^2}^2.
  \end{equation}
   
   We shall next perform an estimate similar to the one of Lemma \ref{lem12}:
 $$ { d \over dt } { 1 \over 2} \big(  \alpha \| \nabla u\|_{L^2}^2 + K \| \nabla  \tau \|_{L^2}^2 \big) 
   +  \mu K \| \nabla^2  \tau \|_{L^2}^2 +  \beta K \| \nabla  \tau \|_{L^2}^2  \leq K \int Q (\nabla u, \tau) : \Delta  \tau - K \int u \cdot \nabla \tau : \Delta \tau$$
    and we use that
    $$   \int Q (\nabla u, \tau) : \Delta \tau \leq  \| \Delta \tau \|_{L^2} \| \tau \|_{L^\infty} \| \nabla u \|_{L^2}$$
     and that after an integration by parts
     $$  - K \int u \cdot \nabla \tau : \Delta \tau \leq  CK  \| \nabla u \|_{L^2} \| \nabla \tau \|_{L^4}^2  \leq C K \| \nabla u \|_{L^2} \| \nabla   \tau \|_{L^2}
      \| \nabla^2 \tau \|_{L^2}.$$
       Consequently, we obtain from the Young inequality that
 \begin{equation}
  \label{th2enstrophy}
   {d \over dt } { 1 \over 2} \big(  \alpha \|\nabla u\|_{L^2}^2 + K \| \nabla \tau \|_{L^2}^2 \big) +  {\mu K \over 2 } \| \nabla^2 \tau \|_{L^2}^2 +  {\beta K  } \|\nabla  \tau \|_{L^2}^2
    \leq   C \big( \| \tau \|_{L^\infty}^2 +   { C_{Q}^2 K \over \mu}\| \nabla \tau \|_{L^2}^2 \big)\| \nabla u \|_{L^2}^2.
  \end{equation}
  \end{proof}

  \subsection{Step 2: detecting the enstrophy dissipation}
  
  \begin{lemm}
\label{lem22}
  There exists $C>0$, and $c_{0}>0$  such that for every $T>0$ and every sufficiently smooth solution on $[0,T]$ such that
  $ \| \nabla  u \|_{L^2} \leq c_{0}$, 
   we have the estimates
 $$ \| (u(t), \tau(t)) \|_{L^\infty_{T}H^1} +  
  \| \nabla u\|_{L^2_{T}H^1} + \| \tau \|_{L^2_{T} H^2 } \leq  C (\|u_{0}\|_{H^1} + \| \tau_{0} \|_{H^1}).$$
\end{lemm}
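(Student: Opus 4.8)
The plan is to combine the two differential inequalities \eqref{th2energie} and \eqref{th2enstrophy} from Lemma \ref{lem21} into a single closed estimate for the quantity
$$
Y(t) := \tfrac12\big(\alpha\|u\|_{L^2}^2 + K\|\tau\|_{L^2}^2 + \alpha\|\nabla u\|_{L^2}^2 + K\|\nabla\tau\|_{L^2}^2\big),
$$
and to run a continuity (bootstrap) argument under the smallness hypothesis $\|\nabla u\|_{L^2}\le c_0$. First I would add the two inequalities to obtain, for some $C>0$ depending only on the physical constants,
$$
\frac{d}{dt}Y + \beta K\|\tau\|_{L^2}^2 + \Big(\frac{\mu K}{2}+\beta K\Big)\|\nabla\tau\|_{L^2}^2 + \frac{\mu K}{2}\|\nabla^2\tau\|_{L^2}^2 \leq C\Big(\|\tau\|_{L^\infty}^2 + \frac{C_Q^2 K}{\mu}\|\tau\|_{H^1}^2\Big)\|\nabla u\|_{L^2}^2.
$$
The key observation is that on the right-hand side the factor $\|\tau\|_{H^1}^2$ is, up to constants, controlled by $Y$ itself, while the dissipative terms on the left control $\|\tau\|_{H^1}^2$ (using $\|\tau\|_{L^2}^2+\|\nabla\tau\|_{L^2}^2$) with a strictly positive coefficient proportional to $\min(\beta K,\mu K)$. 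Moreover the prefactor $\|\nabla u\|_{L^2}^2$ is small: by hypothesis it is at most $c_0^2$ wherever the a priori bound holds. The term $\|\tau\|_{L^\infty}^2$ must be handled by interpolation in 2D, e.g. $\|\tau\|_{L^\infty}\lesssim\|\tau\|_{L^2}^{1/2}\|\nabla^2\tau\|_{L^2}^{1/2}$ or a logarithmic/Gagliardo–Nirenberg bound of the form $\|\tau\|_{L^\infty}^2\lesssim \|\tau\|_{H^1}\|\tau\|_{H^2}$; the resulting $\|\tau\|_{L^\infty}^2\|\nabla u\|_{L^2}^2$ is then bounded by $c_0^2\,\|\tau\|_{H^1}\|\nabla^2\tau\|_{L^2}$, which after Young's inequality is absorbed into $\tfrac{\mu K}{2}\|\nabla^2\tau\|_{L^2}^2$ plus a term $\lesssim c_0^2\|\tau\|_{H^1}^2$.

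Carrying this out, one arrives at an inequality of the shape
$$
\frac{d}{dt}Y + c_1\,\|\tau\|_{H^1}^2 + c_2\,\|\nabla^2\tau\|_{L^2}^2 \leq C c_0^2\,\|\tau\|_{H^1}^2
$$
with $c_1,c_2>0$ fixed by the parameters. Choosing $c_0$ small enough that $Cc_0^2 \le c_1/2$, we absorb the right-hand side and get $\frac{d}{dt}Y + \tfrac{c_1}{2}\|\tau\|_{H^1}^2 + c_2\|\nabla^2\tau\|_{L^2}^2 \le 0$; in particular $Y$ is non-increasing, whence $\|(u,\tau)(t)\|_{H^1}^2\lesssim Y(t)\le Y(0)\lesssim \|u_0\|_{H^1}^2+\|\tau_0\|_{H^1}^2$, and after integrating in time we also obtain $\|\tau\|_{L^2_T H^2}^2\lesssim Y(0)$. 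Note that along the way we should also retrieve the bound on $\|\nabla u\|_{L^2_T H^1}$: the first component, $\|\nabla u\|_{L^2_T L^2}$, follows because $\|\nabla u\|_{L^2}\le c_0$ pointwise and actually $\|\nabla u\|_{L^2_T L^2}^2$ is controlled on finite time intervals, but to get genuine $L^2_T$-integrability uniform in $T$ one uses the $\Gamma$-structure as in Theorem \ref{th1} — more directly, one should extract decay of $\|\nabla u\|_{L^2}$ (exponential decay, as announced in Remark 2) so that $\|\nabla^2 u\|_{L^2_T L^2}$ is finite; I would postpone the full $\|\nabla u\|_{L^2_T H^1}$ bound to the step where the modified variable $\Gamma = \mu\,\omega - K R\tau$ is introduced and its exponential decay is proved, exactly as in Step 3 of the proof of Theorem \ref{th1}.

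The continuity argument closing the bootstrap is standard: define $T^\star = \sup\{t\le T : \|\nabla u(s)\|_{L^2}\le c_0 \text{ for } s\le t\}$; on $[0,T^\star)$ the above gives $\|\nabla u(t)\|_{L^2}^2 \le \tfrac{2}{\alpha}Y(0) \lesssim \|u_0\|_{H^1}^2+\|\tau_0\|_{H^1}^2$, so if the initial data are small enough (which is precisely the hypothesis $\delta$ of Theorem \ref{th2}, with $c_0$ chosen as above) the right-hand side is $< c_0^2/2$, strictly improving the bound and forcing $T^\star = T$. The main obstacle I anticipate is purely a matter of bookkeeping with the constants: one must make sure that the coefficient of $\|\tau\|_{H^1}^2$ generated on the right-hand side (from both the $C_Q^2K/\mu$ term and the absorbed $\|\tau\|_{L^\infty}^2$ term) is genuinely beaten by the dissipative coefficient $\min(\beta K, \mu K/2)$ after multiplication by $c_0^2$ — this is where the hypotheses $\beta>0$ and $\mu>0$ are both essential, since if $\beta=0$ the term $\|\tau\|_{L^2}^2$ on the right would not be dominated. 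A secondary subtlety is the choice of the Gagliardo–Nirenberg inequality for $\|\tau\|_{L^\infty}$ in dimension two, where one is at the borderline; using $\|\tau\|_{L^\infty}^2 \lesssim \|\tau\|_{L^2}\|\tau\|_{H^2}$ (valid in 2D up to logarithms, but the clean Sobolev-type inequality $\|\tau\|_{L^\infty}\lesssim \|\tau\|_{H^s}$ with $s>1$ interpolated against $\|\tau\|_{L^2}$ suffices) keeps everything within the dissipation budget.
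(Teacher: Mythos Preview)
Your argument using only Lemma~\ref{lem21} correctly yields the $L^\infty_T H^1$ bound on $(u,\tau)$ and the bound $\|\tau\|_{L^2_T H^2}\lesssim\|(u_0,\tau_0)\|_{H^1}$. However, it does \emph{not} prove the full statement: the lemma also claims $\|\nabla u\|_{L^2_T H^1}$ (in particular $\|\nabla u\|_{L^2_T L^2}$) uniformly in $T$, and the combined inequality of Lemma~\ref{lem21} contains no dissipation whatsoever of $\nabla u$ on the left-hand side. Your inequality $\frac{d}{dt}Y + c_1\|\tau\|_{H^1}^2 + c_2\|\nabla^2\tau\|_{L^2}^2 \le 0$ simply has no $\|\nabla u\|_{L^2}^2$ term available to integrate. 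You recognise this and say you would ``postpone'' the $\nabla u$ bound to the step where $\Gamma=\mu\omega-KR\tau$ is introduced --- but that is precisely \emph{this} lemma. The subsection is titled ``detecting the enstrophy dissipation'' because its whole point is to produce that missing dissipative term.

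The paper's proof introduces $\Gamma$ here, writes its equation \eqref{Gammadamping} with the damping coefficient $K\alpha/\mu>0$, performs an $L^2$ estimate to get \eqref{th2damping}, and then adds this to a large multiple $M$ of the estimate of Lemma~\ref{lem21}. In the resulting functional
\[
N(u,\tau)=M\big(\alpha\|u\|_{L^2}^2+K\|\tau\|_{L^2}^2+\alpha\|\nabla u\|_{L^2}^2+K\|\nabla\tau\|_{L^2}^2\big)+\|\Gamma\|_{L^2}^2,
\]
the damping of $\Gamma$ together with $\|\omega\|_{L^2}\lesssim\|\Gamma\|_{L^2}+\|\tau\|_{L^2}$ produces a genuine $c_0\|\nabla u\|_{L^2}^2$ on the left, which after the smallness absorption gives $\frac{d}{dt}N + c_0(\|\tau\|_{H^2}^2+\|\nabla u\|_{L^2}^2)\le 0$ and hence the claimed $L^2_T$ control on $\nabla u$. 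Your deferral is not merely organisational: Lemma~\ref{lem23} and Corollary~\ref{corfinal} downstream take Lemma~\ref{lem22} as input, so you cannot push the $\Gamma$ argument past this point without circularity.
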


\begin{rema}
\label{remarkweak}
Note that assuming that the initial data are small in $H^1$, Lemma \ref{lem22}, yields a global closed  $H^1$ estimate. We could use this estimate
 to construct a global weak solution (for small data) for the system for which the vorticity is merely in $L^2$ (that is to say like a  DiPerna-Majda type
 solution of the Euler equation \cite{DiPerna-Majda}). 
\end{rema}
\begin{proof}
  We shall now  use the structure of the system in order to detect a damping phenomenon on $\omega= \mbox{curl }u$.
   Let us set
   $$ \Gamma =  \mu  \omega  - K R  \tau.$$
   We can rewrite the  equation \eqref{gammaequation} for $\Gamma$ under the form
   \begin{equation}
   \label{Gammadamping} \partial_{t} \Gamma + u \cdot \nabla \Gamma  + {K \alpha \over \mu } \Gamma = (K \beta - {K \alpha \over \mu}) R \tau +  K [ R, u \cdot \nabla ] \tau - K R \big( Q(\nabla u , \tau) \big).\end{equation}
    Note that since $ K \alpha / \mu >0$ we have detected some damping. A standard $L^2$ energy estimate for this equation yields
    $$ {d \over dt} {1 \over 2 }  \| \Gamma \|_{L^2}^2  +   {1 \over 2 }  {K \alpha \over \mu }  \| \Gamma \|_{L^2}^2 \leq   { \mu \over \alpha K} \big( K \beta - 
   {K \alpha \over \mu} \big)^2 \| R \tau \|_{L^2}^2 + \big( K \|\nabla u \|_{L^2} \| \tau \|_{L^\infty} +   K  \| [ R, u \cdot \nabla ] \tau \|_{L^2}\big) \| \Gamma \|_{L^2}$$
   where we have used the $L^2$ continuity of the Riesz transform to estimate the term involving $Q$.
    To handle the last term in the right hand side, we use  Corollary \ref{corcom} (1) (with $p=2, \, r= 2$) to get
    $$  \| [ R, u \cdot \nabla ] \tau \|_{L^2}  \leq  C \| \nabla u \|_{L^2} \big( \| \tau \|_{B_{\infty, 2}^0} +  \| \tau \|_{L^2} \big) \leq C  \| \nabla u \|_{L^2} \| \tau \|_{H^1} $$
    where the last estimate is a consequence of  the embedding $H^1 \subset B^0_{\infty, 2}$.
      Consequently, by using the Young inequality, we  obtain:
   \begin{equation}
   \label{th2damping}
     {d \over dt} {1 \over 2 }  \| \Gamma \|_{L^2}^2  +   {1 \over 2 }  {K \alpha \over \mu }  \| \Gamma \|_{L^2}^2 \leq   { \mu \over \alpha K} \big( K \beta - 
   {K \alpha \over \mu} \big)^2 \| R \tau \|_{L^2}^2 +   \big( \| \tau \|_{H^1}^2 + \|\tau\|_{L^\infty}^2 \big)   \| \nabla u  \|_{L^2}^2   \| \tau \|_{H^1}^2. 
   \end{equation}
    To conclude, we can combine the estimate of Lemma \ref{lem21} and \eqref{th2damping}.
     Let us define
     $$ N(u,\tau)=  M \big(   \alpha \|u\|_{L^2}^2 + K \| \tau \|_{L^2}^2 \big) + M \big(  \alpha \|\nabla u\|_{L^2}^2 + K \| \nabla \tau \|_{L^2}^2 \big) 
      +    \| \Gamma \|_{L^2}^2,$$
      with $M>0$ sufficiently large, then there exists $c_{0}>0$ that depends only on the physical constants of the problem  such that
      $$ { d \over dt } { 1 \over 2} N(u, \tau) + c_{0} \big(  \| \tau \|_{H^2}^2 + \| \nabla u \|_{L^2}^2 \big)
       \leq   C\big( \|\tau \|_{L^\infty}^2 +   \| \tau \|_{H^1}^2  \big) \| \nabla u \|_{L^2}^2.$$
        Consequently, by using the Sobolev embedding $H^2 \subset L^\infty$ and by  assuming that
        $$  C    \| \nabla u \|_{H^1}^2  \leq  {c_{0} \over 2}, $$
        we find that
       $${ d \over dt } { 1 \over 2} N(u, \tau) + {c_{0} \over 2 }  \big(  \| \tau \|_{H^2}^2 + \| \nabla u \|_{L^2}^2 \big)
       \leq   0$$
       hence in particular, 
       $$N(u(t), \tau(t)) \leq N(u_{0}, \tau_{0}),  \,   \| \tau \|_{L^2_{t}H^2}^2 + \| \nabla u \|_{L^2_{t}L^2}^2 \leq  C N(u_{0}, \tau_{0}).$$
     \end{proof}   
     
     \subsection{Estimate of the vorticity in a  strong norm}
     \begin{lemm}
\label{lem23}
  There exists $C>0$, and $c_{0}>0$  such that for every $T>0$ and every sufficiently smooth solution on $[0,T]$ such that
  $ \| \nabla  u \|_{L^2}  \leq c_{0}$ and
   \begin{equation}
   \label{hyp2}  \forall t, \, s, \,  0 \leq s \leq t \leq T, \int_{s}^t \| \nabla v \|_{L^\infty} \leq  {1 \over 2 } + c_{0}(t-s), \quad \| \Gamma(t)\|_{B_{\infty, 1}^0}
    \leq c_{0}
   \end{equation}
   where $\Gamma =  \mu \omega -K R \tau$, then 
   we have the estimate
 $$ \| \Gamma   \|_{L^\infty_{T} B_{\infty, 1}^0} \leq  C (\|(u_{0}, \tau_{0})\|_{H^1} + \|( \omega_{0}, \tau_{0})\|_{B_{\infty, 1}^0})
  .$$
\end{lemm}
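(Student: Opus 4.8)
The plan is to run an $L^\infty$-based Littlewood--Paley estimate in $B^0_{\infty,1}$ for the damped transport equation \eqref{Gammadamping} satisfied by $\Gamma=\mu\omega-KR\tau$, using crucially that the damping rate $\lambda:=K\alpha/\mu$ is strictly positive. Writing the solution by Duhamel's formula along the characteristics of $u$,
\[
\Gamma(t)=e^{-\lambda t}\,\Gamma_0\circ\psi_{t,0}+\int_0^t e^{-\lambda(t-s)}\,G(s)\circ\psi_{t,s}\,ds ,
\]
where $\psi_{t,s}$ is the flow of $u$ from time $s$ to time $t$ and $G$ denotes the right-hand side of \eqref{Gammadamping}, I would invoke the sharp transport estimate in the borderline space $B^0_{\infty,1}$ à la Vishik--Hmidi \cite{Vishik}, \cite{Hmidi}: for a divergence-free field, composition with $\psi_{t,s}$ is bounded on $B^0_{\infty,1}$ with operator norm $\le C\bigl(1+\int_s^t\|\nabla u\|_{L^\infty}\bigr)$ (the estimate is only linear in this quantity in $B^0_{\infty,1}$, which is the point). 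This is exactly where hypothesis \eqref{hyp2} enters: it forces that norm to be $\le C\bigl(\tfrac32+c_0(t-s)\bigr)$, and since $\bigl(\tfrac32+c_0 x\bigr)e^{-\lambda x}\le C e^{-\lambda x/2}$ once $c_0$ is small compared with $\lambda$, one gets
\[
\|\Gamma(t)\|_{B^0_{\infty,1}}\le C\|\Gamma_0\|_{B^0_{\infty,1}}+C\int_0^t e^{-\lambda(t-s)/2}\,\|G(s)\|_{B^0_{\infty,1}}\,ds ,
\]
with $C$ independent of $T$; this combination of damping and \eqref{hyp2} is the mechanism producing a time-uniform constant.

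It then remains to bound $\|G\|_{B^0_{\infty,1}}$ by quantities already controlled globally by Lemma \ref{lem22}. For the linear term I use that $R$ is a degree-zero operator, $\|R\tau\|_{B^0_{\infty,1}}\lesssim\|\tau\|_{B^0_{\infty,1}}+\|\tau\|_{L^2}\lesssim\|\tau\|_{H^2}$, the last step by the two-dimensional embedding $H^2\hookrightarrow B^1_{\infty,2}\hookrightarrow B^0_{\infty,1}$ (already used in \eqref{taubesov}). For the commutator I apply Corollary \ref{corcom}(2) with $r=1$ and some $\varepsilon\in(0,1)$ to get $\|[R,u\cdot\nabla]\tau\|_{B^0_{\infty,1}}\lesssim(\|\omega\|_{L^\infty}+\|\omega\|_{L^2})\bigl(\|\tau\|_{B^\varepsilon_{\infty,1}}+\|\tau\|_{L^2}\bigr)$, and then bound $\|\omega\|_{L^\infty}\lesssim\|\omega\|_{B^0_{\infty,1}}\lesssim\tfrac1\mu\|\Gamma\|_{B^0_{\infty,1}}+\tfrac{K}{\mu}\|R\tau\|_{B^0_{\infty,1}}\lesssim c_0+\|\tau\|_{H^2}$ using the a priori bound $\|\Gamma(t)\|_{B^0_{\infty,1}}\le c_0$ from \eqref{hyp2}, together with $\|\omega\|_{L^2}=\|\nabla u\|_{L^2}\le c_0$ and $\|\tau\|_{B^\varepsilon_{\infty,1}}\lesssim\|\tau\|_{H^2}$. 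For the quadratic term I use $\|R(Q(\nabla u,\tau))\|_{B^0_{\infty,1}}\lesssim\|Q(\nabla u,\tau)\|_{B^0_{\infty,1}}+\|Q(\nabla u,\tau)\|_{L^2}$, a paraproduct estimate $\|Q(\nabla u,\tau)\|_{B^0_{\infty,1}}\lesssim\|\nabla u\|_{B^0_{\infty,1}}\|\tau\|_{B^\varepsilon_{\infty,1}}$ (the positive regularity of $\tau$ controls the low--high remainder), $\|\nabla u\|_{B^0_{\infty,1}}\lesssim\|\omega\|_{B^0_{\infty,1}}+\|\nabla u\|_{L^2}$, and $\|Q(\nabla u,\tau)\|_{L^2}\lesssim\|\nabla u\|_{L^2}\|\tau\|_{L^\infty}\lesssim\|\nabla u\|_{L^2}\|\tau\|_{H^2}$. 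Altogether $\|G(s)\|_{B^0_{\infty,1}}\lesssim\bigl(c_0+\|\tau(s)\|_{H^2}\bigr)\|\tau(s)\|_{H^2}$.

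Inserting this into the Duhamel bound and invoking Lemma \ref{lem22}, I control the quadratic-in-$\tau$ contribution by $\int_0^t\|\tau(s)\|_{H^2}^2\,ds\le C\,\|(u_0,\tau_0)\|_{H^1}^2$ and the remaining contribution by Cauchy--Schwarz against the exponential weight, $\int_0^t e^{-\lambda(t-s)/2}\|\tau(s)\|_{H^2}\,ds\le C\,\|\tau\|_{L^2_tH^2}\le C\,\|(u_0,\tau_0)\|_{H^1}$, which gives
\[
\|\Gamma(t)\|_{B^0_{\infty,1}}\le C\bigl(\|\Gamma_0\|_{B^0_{\infty,1}}+\|(u_0,\tau_0)\|_{H^1}+\|(u_0,\tau_0)\|_{H^1}^2\bigr) ;
\]
since $\|\Gamma_0\|_{B^0_{\infty,1}}\lesssim\|\omega_0\|_{B^0_{\infty,1}}+\|\tau_0\|_{B^0_{\infty,1}}+\|\tau_0\|_{L^2}$, this is the asserted estimate (absorbing the quadratic term using that the data are $O(1)$). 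The main obstacle is the first step: $B^0_{\infty,1}$ is borderline for transport, the naive commutator estimate loses a logarithm, and one must go through the Vishik--Hmidi machinery; hypothesis \eqref{hyp2} is precisely the a priori information that makes that machinery applicable and, combined with the damping, supplies the time-uniform constant. Everything after that is routine bookkeeping with paraproducts, Riesz-transform bounds, Corollary \ref{corcom}, and the two-dimensional Sobolev/Besov embeddings.
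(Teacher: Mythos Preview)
Your proposal is correct and follows essentially the same route as the paper: a Duhamel-type estimate in $B^0_{\infty,1}$ for the damped transport equation \eqref{Gammadamping}, with hypothesis \eqref{hyp2} ensuring that the exponential damping survives the transport loss, followed by the same term-by-term treatment of the source via Corollary~\ref{corcom}(2), the embedding $H^2\hookrightarrow B^\varepsilon_{\infty,1}$, and Lemma~\ref{lem22}. The only cosmetic differences are that you invoke the sharper linear Vishik--Hmidi transport bound where the paper uses the cruder exponential one (under \eqref{hyp2} both yield the same decaying kernel), and you plug in the a~priori bound $\|\Gamma\|_{B^0_{\infty,1}}\le c_0$ directly rather than keeping $\sup_{[0,t]}\|\Gamma\|_{B^0_{\infty,1}}$ on the right and absorbing it at the end.
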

Note that here, we shall estimate the stronger $B_{\infty, 1}^0$ norm rather than the $L^\infty$ norm because of the Riesz transform
 of the  $Q$ term.
 \begin{proof}
 We first observe that for $\varepsilon \in (0, 1)$, 
 $$  \| \tau  \|_{ L^2_{T}B_{\infty, 1}^\varepsilon } \lesssim  \delta_{0}.$$
  In the proof, we shall denote by $\delta_{0}$ the size of the initial data in the naturally involved norms.
 
  We use again the  equation \eqref{Gammadamping}.
  Let us set ${c_{0} \over 2 }= {K \alpha \over \mu}$. We shall use the following estimate:
   (see [1] for example)
   \begin{lemm}
    For $v$ a divergence free vector field, the solution of the convection equation
    $$ \partial_{t} f + v \cdot \nabla f = F, \quad f_{/t=0}= f_{0}$$
     satisfies the estimate
     $$ \|f(t)\|_{B_{\infty, 1}^0} \leq e^{\int_{0}^t \|\nabla u \|_{L^\infty}}  \|f_{0}\|_{B_{\infty, 1}^0}+ \int_{0}^t e^{\int_{s}^t \| \nabla u \|_{L^\infty}} \, \| F(s) \|_{B_{\infty, 1}^0}\, ds.$$
   \end{lemm}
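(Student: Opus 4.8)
My plan is to prove this in the usual way for the space $B^0_{\infty,1}$: localize in frequency, use the $L^\infty$ maximum principle for the transport equation one dyadic block at a time, sum the results in $\ell^1$, and absorb the commutator thus produced by Gronwall's inequality. (This is the endpoint $s=0$ case of the classical transport estimates in Besov spaces; see \cite{1}.) First I would apply the Littlewood-Paley block $\Delta_q$ to the equation and commute it through the transport operator, so that for every $q\geq -1$
$$\partial_t\Delta_q f+v\cdot\nabla\Delta_q f=\Delta_q F+R_q,\qquad R_q:=[\,v\cdot\nabla,\,\Delta_q\,]f .$$
For any $g$ solving $\partial_t g+v\cdot\nabla g=h$ one has, by transport along the flow of $v$ (a bi-Lipschitz bijection of $\R^2$ since $\nabla v\in L^1([0,t];L^\infty)$), the bound $\|g(s)\|_{L^\infty}\leq\|g(0)\|_{L^\infty}+\int_0^s\|h(\sigma)\|_{L^\infty}\,d\sigma$, with constant $1$. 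Applying this with $g=\Delta_q f$ and $h=\Delta_q F+R_q$ and summing over $q\geq -1$ yields
$$\|f(t)\|_{B^0_{\infty,1}}\leq\|f_0\|_{B^0_{\infty,1}}+\int_0^t\|F(s)\|_{B^0_{\infty,1}}\,ds+\int_0^t\Big(\sum_{q\geq -1}\|R_q(s)\|_{L^\infty}\Big)\,ds .$$

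The heart of the matter is the commutator estimate
$$\sum_{q\geq -1}\|R_q\|_{L^\infty}\ \lesssim\ \|\nabla v\|_{L^\infty}\,\|f\|_{B^0_{\infty,1}},$$
after which the integrating-factor form of Gronwall's inequality gives the claimed bound. I would establish it via Bony's decomposition of $v\cdot\nabla f$ into its two paraproducts and its remainder: the paraproduct in which the low frequencies of $v$ multiply the high frequencies of $\nabla f$ is treated by a first-order Taylor expansion of the convolution kernel of $\Delta_q$ (the ensuing factor $2^{-q}$ cancels the derivative falling on $f$); the other paraproduct and the remainder are handled with Bernstein's inequality together with the divergence-free condition, rewriting $v\cdot\nabla f=\div(fv)$ so that the derivative sits on $\Delta_q$ and the low-frequency output of the high-high interaction gains a compensating factor $2^{q}$. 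In each case one gets $\|R_q\|_{L^\infty}\lesssim c_q\,\|\nabla v\|_{L^\infty}\,\|f\|_{B^0_{\infty,1}}$ with $(c_q)_q\in\ell^1$ of norm $\lesssim 1$, so the $q$-sum converges; this $\ell^1$ summability is precisely what singles out $B^0_{\infty,1}$ rather than $L^\infty$ or $B^0_{\infty,\infty}$, and what collapses in the borderline case discussed in the introduction.

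The one genuinely delicate step is therefore this endpoint regularity: for $B^s_{\infty,r}$ with $s>0$ the commutator gain is automatic, whereas at $s=0$ the $q$-bookkeeping above must be done with care so that the high-high term costs no logarithm, and it is exactly there that divergence-freeness is used. A last caveat: the argument I have sketched produces $e^{C\int_0^t\|\nabla v\|_{L^\infty}}$ with some absolute $C>1$ in place of the constant $1$ displayed in the statement; getting the sharp dependence (indeed even the linear bound $1+\int_0^t\|\nabla v\|_{L^\infty}$) is Vishik's refinement recorded in \cite{1}, but only the qualitative form of the estimate is used in what follows, so this makes no difference here.
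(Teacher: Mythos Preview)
Your proposal is correct and is precisely the standard argument the paper has in mind: the paper does not give its own proof of this lemma but simply refers to \cite{1} (Bahouri--Chemin--Danchin), where exactly this Littlewood--Paley localization, blockwise maximum principle, endpoint commutator bound $\sum_q\|[v\cdot\nabla,\Delta_q]f\|_{L^\infty}\lesssim\|\nabla v\|_{L^\infty}\|f\|_{B^0_{\infty,1}}$, and Gronwall closure are carried out. Your caveat about the constant in the exponent (and Vishik's sharpening) is also accurate, and, as you note, only the qualitative form is used in the sequel.
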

   From this lemma, get by using the damping term in \eqref{Gammadamping}:
   \begin{equation}
   \label{Gammaestimate2}
    \| \Gamma (t) \|_{B_{\infty, 1}^0} \leq  e^{- 2c_{0} t + \int_{0}^t \|\nabla u \|_{L^\infty}}  \| \Gamma_{0}\|_{B_{\infty, 1}^0}
    + \int_{0}^t e^{- 2c_{0}(t-s) + \int_{s}^t \| \nabla u \|_{L^\infty}} \, \| F(s) \|_{B_{\infty, 1}^0}\, ds
    \end{equation}
    with
    $$ F= (K \beta - {K \alpha \over \mu}) R \tau +  K [ R, u \cdot \nabla ] \tau - K R \big( Q(\nabla u , \tau) \big):= F_{1}+ F_{2}+ F_{3}.$$
     From \eqref{hyp2}, we get that for $t \in [0, T]$:
     $$  \| \Gamma (t) \|_{B_{\infty, 1}^0} \leq  e^{- c_{0} t }   \delta_{0} + 
     \int_{0}^t e^{- c_{0}(t-s) } \, \| F(s) \|_{B_{\infty, 1}^0}\, ds.$$
     Next, we observe that 
     \begin{equation}
     \label{F1estimate} \int_{0}^t  e^{- c_{0}(t-s) } \, \| F_{1}(s) \|_{B_{\infty, 1}^0}\, ds \leq e^{-c_{0} t} \int_{0}^t e^{c_{0} s}  \big( \|\tau(s) \|_{L^2}
      + \| \tau (s) \|_{B_{\infty, 1}^\varepsilon} \, ds 
      \leq e^{-c_{0} t} \int_{0}^t  e^{c_{0} s} \| \tau (s) \|_{ H^2} \big) \, ds \leq C \delta_{0}
      \end{equation}
       by using Cauchy-Schwarz and Lemma \ref{lem22}.

 In a similar way, from (2) in Corollary \ref{corcom} and Lemma \ref{lem22}, we obtain
 $$ \int_{0}^t e^{- c_{0}(t-s) } \, \| F_{2}(s) \|_{B_{\infty, 1}^0}\, ds \leq \int_{0}^t  e^{- c_{0}(t-s) }
  \big(  \|\Gamma (s) \|_{L^\infty} +  \delta_{0} g(t)  \big) \big( \| \tau \|_{B_{\infty, 1}^\varepsilon} + \delta_{0} g(t) \big)\, ds$$
  with $g(t)$ an $L^2$ function.
   This yields
   \begin{equation}
   \label{F2estimate} \int_{0}^t e^{- c_{0}(t-s) } \, \| F_{2}(s) \|_{B_{\infty, 1}^0}\, ds \leq C \delta_{0} +  C \delta_{0} c_{0} \sup_{[0, t]} \| \Gamma \|_{B_{\infty, 1}^0}
   \end{equation}
   by using  again Cauchy-Schwarz and the assumption \eqref{hyp2}.
  To estimate the term involving $F_{3}$, we use that
 $$ \|F_{3} \|_{B_{\infty, 1}^0} \leq   \| Q( \nabla u, \tau) \|_{L^2} + \|  Q(\nabla u, \tau)\|_{B_{\infty, 1}^0} 
   \leq   \big( \| \nabla u \|_{L^2} + \| \nabla u \|_{B_{\infty, 1}^0} \big) \| \tau \|_{B_{\infty, 1}^\varepsilon}
   $$
   and hence by using again Lemma \ref{lem22}, we find
   $$  \|F_{3} \|_{B_{\infty, 1}^0} \leq 
   \big(  \delta_{0} g(t)   \| \Gamma \|_{B_{\infty, 1}^0} + \|\tau \|_{B_{\infty,1}^\varepsilon} \big)  \| \tau \|_{B_{\infty, 1}^\varepsilon}.
$$
Consequently,  we also obtain
\begin{equation}
\label{F3estimate}
 \int_{0}^t e^{- c_{0}(t-s) } \, \| F_{3}(s) \|_{B_{\infty, 1}^0}\, ds \leq  C  \delta_{0}  +  C \delta_{0} c_{0} \sup_{[0, t]} \| \Gamma \|_{B_{\infty, 1}^0}. 
\end{equation}

By combining \eqref{Gammaestimate2} and \eqref{F1estimate}, \eqref{F2estimate}, \eqref{F3estimate}, this yields
$$ \| \Gamma (t) \|_{B_{\infty, 1}^0} \leq    C \delta_{0} + 
     C \delta_{0} c_{0} \sup_{[0, t]} \| \Gamma \|_{B_{\infty, 1}^0}
     $$
  and the result follows by choosing  for $c_{0}$ sufficiently small.  
 \end{proof} 
   
   \begin{cor}
   \label{corfinal}
   Under the assumptions of Lemma \ref{lem23},  
we have for  every $t, \, s$, $0 \leq s \leq t \leq T$, 
   $$ \int_{s}^t \| \nabla u  \|_{L^\infty} \leq   C ( \delta (t-s) + \delta ).$$ 
   \end{cor}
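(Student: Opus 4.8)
The plan is to bound $\|\nabla u\|_{L^\infty}$ pointwise in time by splitting the velocity gradient into a low-frequency part, controlled by $\|\nabla u\|_{L^2}$ (hence by Lemma \ref{lem22} small, of size $\delta$), and a high-frequency part, controlled by $\|\omega\|_{B_{\infty,1}^0}$ via the standard elliptic estimate $\|\nabla u\|_{B_{\infty,1}^0} \lesssim \|u\|_{L^2} + \|\omega\|_{B_{\infty,1}^0}$ for divergence-free $u$. So the first step is to write, for each time $r$,
\begin{equation*}
\|\nabla u(r)\|_{L^\infty} \lesssim \|\nabla u(r)\|_{B_{\infty,1}^0} \lesssim \|u(r)\|_{L^2} + \|\omega(r)\|_{B_{\infty,1}^0}.
\end{equation*}
By Lemma \ref{lem22} the first term is $\leq C\delta_0$ uniformly in $r$, so it will contribute the $\delta(t-s)$ piece after integrating on $[s,t]$.

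The second step is to convert $\|\omega\|_{B_{\infty,1}^0}$ into $\|\Gamma\|_{B_{\infty,1}^0}$ plus a harmless $\tau$ term, using the definition $\Gamma = \mu\omega - KR\tau$ and $\mu>0$: indeed
\begin{equation*}
\mu\|\omega(r)\|_{B_{\infty,1}^0} \leq \|\Gamma(r)\|_{B_{\infty,1}^0} + K\|R\tau(r)\|_{B_{\infty,1}^0} \lesssim \|\Gamma(r)\|_{B_{\infty,1}^0} + \|\tau(r)\|_{B_{\infty,1}^\varepsilon} + \|\tau(r)\|_{L^2},
\end{equation*}
since $R$ is a degree-zero operator. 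Now Lemma \ref{lem23} gives $\|\Gamma\|_{L^\infty_T B_{\infty,1}^0} \leq C\delta_0$, and from the energy estimate (Lemma \ref{lem22}) together with the embedding $H^2\subset B_{\infty,1}^\varepsilon$ for $\varepsilon<1$ we have $\|\tau\|_{L^2_T B_{\infty,1}^\varepsilon} + \|\tau\|_{L^2_T L^2} \leq C\delta_0$. Hence, integrating over $[s,t]$,
\begin{equation*}
\int_s^t \|\omega(r)\|_{B_{\infty,1}^0}\, dr \lesssim \delta_0(t-s) + (t-s)^{1/2}\Big(\int_s^t \|\tau(r)\|_{B_{\infty,1}^\varepsilon}^2 + \|\tau(r)\|_{L^2}^2\,dr\Big)^{1/2} \lesssim \delta_0(t-s) + \delta_0(t-s)^{1/2}
\end{equation*}
by Cauchy--Schwarz. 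A slight care point: the quantity $\delta$ in the statement should be read as $\delta_0$, the size of the data in the relevant norms, so the bound $C(\delta(t-s) + \delta)$ absorbs both the term linear in $(t-s)$ (from the $L^\infty$-in-time quantities $\|u\|_{L^2}$ and $\|\Gamma\|_{B_{\infty,1}^0}$) and the $(t-s)^{1/2}\delta_0 \leq \delta_0 + \delta_0(t-s)$-type term coming from the $L^2_t$ quantities, after possibly enlarging $C$; one uses $ab \le \tfrac12(a^2+b^2)$ to turn $(t-s)^{1/2}$ into $1 + (t-s)$.

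Combining the three steps yields $\int_s^t \|\nabla u\|_{L^\infty} \leq C(\delta(t-s)+\delta)$, as claimed. The only point requiring genuine attention is the bookkeeping of the constants and the precise meaning of $\delta$ versus $\delta_0$ — one must check that the constant $C$ here depends only on the physical constants and that the smallness of $c_0$ in the hypotheses of Lemmas \ref{lem22} and \ref{lem23} is compatible with what is needed here; there is no real analytic obstacle, since every ingredient (the elliptic estimate, the degree-zero mapping property of $R$, Lemma \ref{lem22}, Lemma \ref{lem23}, and the Sobolev embedding $H^2\subset B_{\infty,1}^\varepsilon$) is already available. In other words, this corollary is essentially a repackaging of Lemmas \ref{lem22} and \ref{lem23}; the work was done there, and what remains is to close the bootstrap: the conclusion $\int_s^t\|\nabla u\|_{L^\infty} \le C(\delta(t-s)+\delta)$ improves on the first hypothesis in \eqref{hyp2} provided $\delta$ is chosen small enough that $C\delta \le \tfrac12$ and $C\delta \le c_0$, which is how this corollary will be used to run a continuity argument.
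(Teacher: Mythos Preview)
Your proof is correct and follows essentially the same approach as the paper: bound $\|\nabla u\|_{L^\infty}$ by $\|u\|_{L^2}+\|\omega\|_{B_{\infty,1}^0}$ (the paper writes $\|\nabla u\|_{B_{\infty,1}^0}$ but immediately passes to $\Gamma$ and $\tau$ the same way), invoke Lemmas \ref{lem22} and \ref{lem23}, obtain $C(\delta_0(t-s)+\delta_0\sqrt{t-s})$, and finish with Young's inequality. Your added remarks on the bootstrap closure match exactly how the paper uses this corollary in the proof of Theorem 1.2.
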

   \begin{proof}
    We note that
    $$ \int_{s}^t \| \nabla u  \|_{L^\infty} \leq \int_{s}^t \big( \|u \|_{L^2} +  \| \nabla u \|_{B_{\infty, 1}^0})  \leq \int_{s}^t \big( \|(u, \tau) \|_{L^2} +  \| \Gamma  \|_{B_{\infty, 1}^0}) + \|\tau \|_{B_{\infty, 1}^0}.$$
    Consequently, by using Lemma \ref{lem22} and Lemma \ref{lem23}, we obtain
    $$  \int_{s}^t \| \nabla u  \|_{L^\infty} \leq C( \delta_{0} (t-s) + \delta_{0} \sqrt{t-s}) \leq C( \delta_{0} +    \delta_{0} (t-s))$$
    where the last estimate follows from the Young inequality.
   \end{proof}
   
   \subsection{Proof of Theorem 1.2}
   The proof follows by bootstrap. Indeed, if the initial data $(u_{0},\tau_{0})$ is of size less than or equal to $\delta$ in  $H^1 \times H^1$ and $B^{1}_{\infty,1} \times B^{0}_{\infty,1}$  with $\delta$ is small enough, then the solution will remain small for a short time afterwards in the same spaces. This follows from the local well-posedness. Using Lemmas 5.2 and 5.4 and Corollary 5.6 we see that the solution will thus 
    be globally defined  and  remains small in these spaces.  We  can  then  propagate the  $H^s$ regularity globally in time. This follows from observing that controlling $|\nabla u|_{ L^{1}L^\infty}$ is the key to prevent blow-up in $H^s \times H^s ,$ \`a  la  Beale-Kato-Majda.\qed

  \bigskip 
   
  \subsection*{Acknowledgements}
  T.M. Elgindi is partially supported by NSF grant \# XXXX grant. This work was carried out during a visit of the second author to the Courant Institute, he  thanks P. Germain and N. Masmoudi for the invitation.


\begin{thebibliography}{23}
  
\bibitem{1}H. Bahouri, J.-Y. Chemin, and R Danchin. \emph{Fourier analysis and nonlinear partial differential equations.} Springer(2011) 434.
\bibitem{2}J. T. Beale, T. Kato, A. Majda, Remarks on the breakdown of smooth solutions for the 3D Euler equations. Commun. Math. Phys. 94 (1984), no. 1, 61-66.
\bibitem{Chae} { D. Chae}:   Global regularity for the $2$-D Boussinesq equations with partial viscous terms, Advances in Math., {\bf203},  2 (2006) 497-513.
\bibitem{3} J.Y. Chemin, N. Masmoudi, About lifespan of regular solutions of equations related to viscoelastic fluids, SIAM J. Math. Anal. 33 (1) (2001) 84–112.
\bibitem{4} P. Constantin, M Kliegl, Note on Global Regularity for Two-Dimensinoal Oldroyd-B Fluids with Diffusive Stress, Arch. Ration. Mech. Anal, 206, no. 3,  725-740.
\bibitem{Danchin} R. Danchin and M. Paicu, {Global well-posedness issues for the inviscid Boussinesq system with Yudovich's type data.} 
 Comm. Math. Phys. 290 (2009), no. 1, 1-14
\bibitem{DiPerna-Majda} R. J. DiPerna  and  A. Majda, Concentrations in regularizations for 2-D incompressible flow, Comm. Pure Appl. Math. 40 (1987), no. 3, 301-345.
\bibitem{5} T.M. Elgindi, N. Masmoudi, $L^\infty$ ill-posedness for a class of equations arising in hydrodynamics, preprint. 
\bibitem {Hee} C.P. Hee, J.P. Young, Existence of solution for the Euler equations in a critical Besov space $B_{\infty, 1}^1(Rn)$, Comm. Partial Differential Equations 29 (2004) 1149Ð1166.
\bibitem{Hou} T. Y. Hou, C. Li, {Global well-posedness of the viscous Boussinesq equations}, Discrete and Continuous Dynamical Systems, {\bf 12,}(1)(2005) 1-12.
\bibitem{Hmidi}T. Hmidi and  S. Keraani,  Incompressible viscous flows in borderline Besov spaces,   Arch. for Rational Mech. and Analysis  {\bf 189} (2008), no 2, 283-300.
\bibitem{6} T. Hmidi and S. Keraani, On the global well-posedness of the Boussinesq system with zero
viscosity, Indiana Univ. Math. Journal. 58 (2009), 1591-1618.
\bibitem{7} T. Hmidi, S. Keraani and F. Rousset, Global well-posedness for a Navier-Stokes-Boussinesq
system with critical dissipation, J. Differential Equations 249 (2010), no. 9, 2147Ð2174.
\bibitem{8} T. Hmidi, S. Keraani and F. Rousset Global well-Posedness for Euler-Boussinesq system
with critical dissipation, Comm. Partial Diff. Equations. 36, 3 (2011), 420-445.
\bibitem{9} T. Hmidi and F. Rousset, Global well-posedness for the Navier-Stokes-Boussinesq system
with axisymmetric data, Ann. I. H. Poincar\'e-AN. 27 (2010), 1227-1246.
\bibitem{10} T. Hmidi and F. Rousset, Global well-posedness for the Euler-Boussinesq system with axisymmetric data, J. Functional Analysis. 260 (2011), 745-796.
\bibitem{Nader2} Z. Lei, N.  Masmoudi,  and Y.  Zhou,
Remarks on the blowup criteria for Oldroyd models.
J. Differential Equations 248 (2010), no. 2, 328Ð341. 
\bibitem{11} Z. Lei, T.C. Sideris, Y. Zhou, Almost Global Existence for 2-D Incompressible Isotropic Elastodynamics, preprint, arXiv:1212.6391.
\bibitem{12}  Z. Lei, C. Liu, Y. Zhou, Global existence for a 2D incompressible viscoelastic model with small strain, Commun. Math. Sci. 2007, 5, 595-616.
\bibitem{13} Z. Lei, C. Liu, Y. Zhou, Global solutions for incompressible viscoelastic fluids, Arch. Ration. Mech. Anal, 188 (2008), no. 3,  371-398.
\bibitem{14} F.-H. Lin, C. Liu, P. Zhang. On hydrodynamics of viscoelastic fluids. Comm. Pure Appl.
Math. 58(11) (2005), 1437-1471.
\bibitem{15}P.-L. Lions, N. Masmoudi, Global solutions for some Oldroyd models of non-Newtonian flows, Chinese Ann. Math. Ser. B 21 (2000) 131–146.
\bibitem{Vishik} M. Vishik, {Hydrodynamics in Besov Spaces}. Arch. Rational Mech. Anal 145, p. 197-214, (1998).




\end{thebibliography}
\end{document}